\documentclass{article}
\usepackage{eurosym}
\usepackage{amssymb}
\usepackage{amsfonts}
\usepackage{amsmath}
\usepackage{mathrsfs}

\newtheorem{theorem}{Theorem}

\newtheorem{corollary}[theorem]{Corollary}

\newtheorem{definition}[theorem]{Definition}
\newtheorem{example}[theorem]{Example}

\newtheorem{lemma}[theorem]{Lemma}
\newtheorem{notation}[theorem]{Notation}

\newtheorem{proposition}[theorem]{Proposition}
\newtheorem{remark}[theorem]{Remark}

\newenvironment{proof}[1][Proof]{\noindent\textbf{#1.} }{\ \rule{0.5em}{0.5em}}

\begin{document}
\title{\textbf{Weak almost contact structure with B-metric  }}
\author{Cornelia-Livia Bejan$^{1}$, \c{S}emsi Eken Meri\c{c}$^{2}$}
\date{\vspace{-5ex}}
\maketitle


\bigskip

\textbf{Abstract} Corresponding to weak almost contact metric structures (defined recently by Rovenski and Wolak), we introduce and study here weak almost contact structures with B-metric, which generalize the classical almost contact structures with Norden metric (B-metric). Several geometric properties are obtained, some special classes are investigated and a lot of examples are constructed throughout this work.

\bigskip

\noindent\textbf{Mathematics Subject Classification (2020).} 53C15, 53C25, 53D15 \medskip

\noindent\textbf{Keywords.} Almost contact structures, Norden metric, Killing tensor field
\medskip

\section{Introduction}

The geometry of almost contact  metric manifolds, introduced in the 1960{'}s as odd dimensional counterparts of almost Hermitian manifolds, has undergone a great development (see \cite{Blair} and references therein). Since then, contact geometry has been applied in physics (particularly mechanics and thermodynamics), in the energetic and dynamic structure of systems, optics and ray geometry, in engineering (motion constraints) and so on. For a new approach of almost contact metric geometry, see \cite{Ilka}. Correspondingly, the geometry of almost contact manifolds with Norden metric (B-metric) (e. g. \cite{Ganchev1}, \cite{Ivanov}, \cite{Manev2}, \cite{Nakova}) stands for a natural extension to the odd dimensional case of the almost complex geometry with Norden metric (B-metric), (e. g. \cite{Ganchev}, \cite{Norden}, \cite{Oproiu}). The letter B was chosen by Norden to highlight that complex manifolds with isometric metrics (i.e. Hermitian) were studied in the West, but his theory, where the metric was anti-isometric with respect to the complex structure, is a contribution of the Eastern school (i.e. Vostok, which is written Boctok). \medskip																									

\noindent Recently, as a generalization of the almost contact structure, Rovenski and Wolak introduced in \cite{Rovenski1} a new notion, namely weak almost contact structure $(\varphi,Q,\xi,\eta)$. They call a Riemannian metric $g$ compatible with this structure, a weak almost contact metric structure $(\varphi,Q,\xi,\eta,g)$ (see \cite{Rovenski1} - \cite{Rovenski3}) and we stress that $g$ is Riemannian. \medskip

\noindent  To fill a gap in literature, our natural task here is to introduce and study the other case, namely weak almost contact structures $(\varphi,Q,\xi,\eta)$ admitting an anti-compatible metric $g$. We call this new notion a weak almost contact B-metric structure $(\varphi,Q,\xi,\eta,g)$, or a weak almost contact structure with B-metric. It generalizes almost contact structures with B-metric (Norden metric). Different from the above weak almost contact structure introduced by Rovenski and Wolak, the metric in our case is semi-Riemannian (not necessarily Riemannian) and we stress that the anti-compatibility of the metric in our case leads to different results. \medskip

\noindent We find here several geometric properties of weak almost contact B-metric structures, such as their splitting, their parallelism, their integrability, some Killing properties, and so on. We obtain the structural group acting on the underlying manifold. In the last section, we deal with some special classes of these structures. Throughout the paper, we construct a large number of illustrative examples of such manifolds, to support the motivation of this study.  \medskip

\section{Preliminaries}

We recall now an interesting concept, introduced recently in \cite{Rovenski1}, and then studied in \cite{Rovenski5} -  \cite{Rovenski3}:\medskip

\begin{definition} \label{d1}\cite{Rovenski1}
	A smooth odd-dimensional manifold $M^{2n+1} (n\geq 1)$, carries a \textit{weak almost contact structure} $(\varphi,Q,\xi,\eta),$ if it admits a $(1,1)$-tensor $\varphi$, a nonsingular $(1,1)-$tensor field $Q$, the characteristic vector field $\xi$ and its dual 1-form (contact form) $\eta$, satisfying,
	\begin{eqnarray}
		\varphi^2=-Q+\eta\otimes \xi, \ \label{eq1}
	\end{eqnarray}
	\begin{eqnarray}
		\eta(\xi)=1, \ \ \ \ \ Q\xi=\xi,   \label{eq1a}
	\end{eqnarray}
	such that $Ker\eta$ is $\varphi-$invariant, i.e., $\varphi(Ker\eta)\subset Ker\eta$. Then ($M^{2n+1}, \varphi,Q,\xi,\eta)$, is called a weak almost contact manifold.
\end{definition}

This structure $(\varphi,Q,\xi,\eta)$ generalizes the classical almost contact structure (see \cite {Blair} and the references therein), for which $Q=\text{Id}$ on $TM$.\medskip

 By $\eta(\xi)=1$, the contact form $\eta$ determines a smooth $2n-$dimensional distribution $\mathcal{D}:=Ker\eta$, called the contact distribution, which we assumed to be $\varphi-$invariant, i.e., 
\begin{eqnarray}
	\varphi X\in\mathcal{D}, \ \ \ X\in\mathcal{D},  \label{eq3}
\end{eqnarray}
as  in the classical theory of almost contact structure \cite{Blair}, where $Q=\text{Id}$ on $TM$.\medskip

\noindent The relations (\ref{eq1}), (\ref{eq3}) and the nondegeneracy of $Q$ yield the following:

\begin{proposition} \label{p1}
On a weak almost contact manifold $(M^{2n+1},\varphi,Q,\xi,\eta)$, one has:\medskip

\noindent (a) $TM=\mathcal{D}\oplus Span\{\xi\}$; \ \ \ (b) $Im\varphi=\mathcal{D}$; \ \ \ \ (c) $rank \varphi=2n$; \ \ \ \ (d) $\mathcal{D}$ is $Q-$invariant, i.e., $Q(\mathcal{D})\subset\mathcal{D}$;      \ \ \ (e) $\varphi\xi=0$;   \ \ \  (f) $\eta\circ\varphi=0$;  \ \ \ (g) $\eta\circ Q=\eta$. 
\end{proposition}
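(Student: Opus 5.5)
We prove the items in an order where each uses only the previous ones together with (\ref{eq1}), (\ref{eq1a}), (\ref{eq3}) and the nonsingularity of $Q$.

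Item (a) needs only $\eta(\xi)=1$. Every $X$ splits as $X=(X-\eta(X)\xi)+\eta(X)\xi$, and the first summand lies in $\mathcal{D}$. If $a\xi\in\mathcal{D}$, applying $\eta$ gives $a=0$, so the sum is direct.

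Next we get (e), which is the one step needing care. By (\ref{eq1}) and (\ref{eq1a}),
\[
\varphi^2\xi=-Q\xi+\eta(\xi)\xi=-\xi+\xi=0 .
\]
Applying $\varphi$ gives $\varphi^3\xi=0$, which is not enough on its own, so we argue through $\varphi\xi$ directly. Let $v=\varphi\xi$ and decompose $v=w+\eta(v)\xi$ with $w\in\mathcal{D}$, using (a). Then
\[
0=\varphi v=\varphi w+\eta(v)\varphi\xi=\varphi w+\eta(v)(w+\eta(v)\xi).
\]
Applying $\eta$ and using $\varphi w\in\mathcal{D}$ from (\ref{eq3}) gives $\eta(v)^2=0$, so $\eta(v)=0$ and $v=w\in\mathcal{D}$. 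Now $\varphi v=\varphi^2\xi=0$, and by (\ref{eq1}) with $v\in\mathcal{D}$ we have $\varphi^2 v=-Qv$. But $\varphi^2v=\varphi(\varphi v)=0$, so $Qv=0$, and nonsingularity of $Q$ gives $v=0$.

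Item (f) follows: for $X=Y+\eta(X)\xi$ with $Y\in\mathcal{D}$, we have $\varphi X=\varphi Y\in\mathcal{D}$ by (e) and (\ref{eq3}), so $\eta(\varphi X)=0$. This also gives $Im\varphi\subset\mathcal{D}$.

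For (g), apply $\eta$ to (\ref{eq1}). By (f) the left side gives $\eta\circ\varphi^2=0$, while the right side gives $-\eta\circ Q+\eta$. Hence $\eta\circ Q=\eta$. Then (d) follows at once: for $X\in\mathcal{D}$, $\eta(QX)=\eta(X)=0$.

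Finally (b) and (c). On $\mathcal{D}$, (\ref{eq1}) reads $\varphi^2|_{\mathcal{D}}=-Q|_{\mathcal{D}}$. This map is injective on $\mathcal{D}$ because $Q$ is nonsingular, so $\varphi|_{\mathcal{D}}\colon\mathcal{D}\to\mathcal{D}$ is injective and therefore bijective, since $\mathcal{D}$ is finite-dimensional. Thus $Im\varphi\supset\mathcal{D}$, and combined with the inclusion above, $Im\varphi=\mathcal{D}$. Since $\dim\mathcal{D}=2n$, we get $rank\,\varphi=2n$, with kernel $Span\{\xi\}$.

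The only nontrivial point is (e). There, $\varphi^2\xi=0$ must be upgraded to $\varphi\xi=0$, and this uses the $\varphi$-invariance of $\mathcal{D}$ together with the nonsingularity of $Q$ on $\mathcal{D}$.
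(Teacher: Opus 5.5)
Your proof is correct and follows essentially the paper's route: the paper only asserts that (\ref{eq1}), (\ref{eq3}) and the nonsingularity of $Q$ yield these properties, citing Rovenski--Wolak, and you supply the details in a valid order. One minor remark is that (\ref{eq3}) is not needed for (e), since $0=\varphi^3\xi=\varphi^2(\varphi\xi)=-Q\varphi\xi+\eta(\varphi\xi)\xi$, together with $Q\xi=\xi$ and the nonsingularity of $Q$, gives $\varphi\xi=\eta(\varphi\xi)\xi$, whence $0=\varphi^2\xi=\eta(\varphi\xi)^2\xi$; it is (f) that genuinely requires the $\varphi$-invariance of $\mathcal{D}$.
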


\noindent Some of these properties were shown in \cite{Rovenski1}. \medskip

\begin{corollary} \label{l1}
	\noindent Any vector field $Y\in\chi(M)$ can be written as 
	\begin{eqnarray}
		Y=\varphi W+\alpha\xi, \label{e1}
	\end{eqnarray}
	where $W\in\Gamma(\mathcal{D})$ and $\alpha$ is a smooth function on $M$. Moreover, the following statements are equivalent:\medskip
	
	\noindent (i) $Y\in\chi(M)$ is identically zero; (ii) $W$ and $\alpha$ vanish identically; (iii) Both $\varphi Y=0$ and $\eta(Y)=0$.

\end{corollary}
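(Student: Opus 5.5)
The plan is to derive the decomposition from Proposition \ref{p1} (a) and (b), and then check the equivalences one at a time.

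For the decomposition (\ref{e1}), take $Y\in\chi(M)$ and set $\alpha:=\eta(Y)$, which is a smooth function. Then $Y-\alpha\xi$ lies in $\mathcal{D}$, since $\eta(Y-\alpha\xi)=\eta(Y)-\eta(Y)\eta(\xi)=0$ by (\ref{eq1a}). By Proposition \ref{p1}(b), $Im\varphi=\mathcal{D}$, so there is $W$ with $\varphi W=Y-\alpha\xi$. We must arrange that $W$ is a smooth section of $\mathcal{D}$.

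To get such a $W$, restrict $\varphi$ to $\mathcal{D}$. This restriction maps $\mathcal{D}$ into $\mathcal{D}$ by (\ref{eq3}). It is injective: if $X\in\mathcal{D}$ and $\varphi X=0$, then $\varphi^2X=0$. By (\ref{eq1}) this gives $-QX+\eta(X)\xi=-QX=0$, so $X=0$ because $Q$ is nonsingular. Hence $\varphi|_{\mathcal{D}}$ is a smooth bundle automorphism of $\mathcal{D}$. Its inverse is then smooth, so we may define $W:=(\varphi|_{\mathcal{D}})^{-1}(Y-\eta(Y)\xi)\in\Gamma(\mathcal{D})$. The same argument shows that the pair $(W,\alpha)$ is unique.

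For the equivalences:
\begin{itemize}
\item (ii)$\Rightarrow$(i) is immediate from (\ref{e1}).
\item (i)$\Rightarrow$(iii) is trivial.
\item (iii)$\Rightarrow$(ii): apply $\eta$ to (\ref{e1}) and use Proposition \ref{p1}(f), which gives $\alpha=\eta(Y)=0$. Then $Y=\varphi W$. Applying $\varphi$ gives $0=\varphi Y=\varphi^2W=-QW+\eta(W)\xi=-QW$, since $W\in\mathcal{D}$. Nonsingularity of $Q$ then forces $W=0$.
\end{itemize}

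The only step needing care is the smooth, global choice of $W$ inside $\mathcal{D}$. Proposition \ref{p1}(b) alone gives only pointwise surjectivity. The injectivity of $\varphi|_{\mathcal{D}}$, which uses the nondegeneracy of $Q$, is what makes the inverse a smooth bundle map.
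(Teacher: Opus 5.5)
Your proof is correct and follows the paper's route. The decomposition comes from Proposition \ref{p1}(a),(b), $\alpha=\eta(Y)$ is pinned down by Proposition \ref{p1}(f), and $W=0$ is forced by $\varphi Y=\varphi^2W=-QW$ with $Q$ nonsingular. Your check that $\varphi|_{\mathcal{D}}$ is a bundle automorphism usefully supplies the smoothness and uniqueness of $W$, which the paper's appeal to Proposition \ref{p1}(b) leaves implicit.
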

\begin{proof}
	From Proposition \ref{p1}, (a) and (b), we obtain (\ref{e1}). From Proposition \ref{p1} (f), we obtain $\alpha=0$ if and only if $\eta(Y)=0$. From Proposition \ref{p1} (e), the relation (\ref{eq1}) and the non-singularity of $Q$, we obtain $W=0$ if and only if $\varphi Y=0$. Since $Y=0$ if and only if both $W=0$ and $\alpha =0$, we complete the proof.	
\end{proof}\medskip

\noindent We notice that both Definition \ref{d1} and Corollary \ref{l1} don't use any metric, but we need them later on, when the B-metric will come into play.

\noindent The above notion of a weak almost contact structure represents the background  for the anti-compatible metric that we introduce in section 4, namely a weak almost contact B-metric.\medskip

\noindent To study some geometric properties of a weak almost contact B-metric structure, which is the main notion of our paper, we recall here the following two definitions:

\begin{definition}  \label{d5}
	Let $(\widetilde{N},g)$ be a (semi-)Riemannian manifold carrying a distribution $D$ and let $\nabla$ denote the Levi-Civita connection of $g$. We say that
	\medskip
	
	\noindent (i) $D$ is parallel, if $\nabla_XY\in\Gamma(D)$, for any $X\in\chi(\widetilde{N})$ and $Y\in \Gamma(D)$;\medskip
	
	\noindent (ii) $D$ is integrable if for every $x\in \widetilde{N}$ there exists an integral manifold $N$ of $D$ containing $x$ (i.e.,  $D_x=T_xN$);\medskip
	
	\noindent (iii)  a (1,1)-tensor field $\varphi$, a vector field $\xi$ and a 1-form $\eta$, is parallel if we have $\nabla\varphi=0$, $\nabla \xi=0$ and $\nabla\eta=0$, respectively; \medskip
	
	\noindent (iv) a vector field $\xi$ is geodesic, if $\nabla_{\xi}\xi=0$;\medskip
	
	\noindent (v) a (1,1)-tensor field $\varphi$ and a 1-form $\eta$ is parallel with respect to a vector field $\xi$, if we have $\nabla_{\xi}\varphi=0$ and $\nabla_{\xi}\eta=0$, respectively.
\end{definition}

\noindent The following notions are well known in Differential Geometry, Theoretical Pysics, Analysis, (see for instance \cite{Baird}). 

\begin{definition}  \label{dd5}
	Let $(N,g)$ be a (semi-)Riemannian manifold and let $\nabla$ be its Levi-Civita connection. We say that
	\medskip
	
	\noindent (i)  a (1,1)-tensor field $\phi$ is divergence-free if $\delta\phi=trace(\nabla\phi)=0$;\medskip
	
	\noindent (ii) a vector field $\xi$ is harmonic if $trace \ g(\nabla_{\bullet} \xi,\bullet)=0$;\medskip
	
	\noindent (iii) a 1-form $\eta$ is closed if $\delta\eta=trace(\nabla_{\bullet}\eta)\bullet=0$.
	
\end{definition}

\begin{remark}
 In Definition \ref{d1},  the notion given by (\ref{eq1}) contains only one vector field $\xi$. In \cite{Rov55}, Rovenski  defined  weak para-$f$-structure by (2), pp. 12, with $p$ vector fields $\xi_1, ..., \xi_p$. However, Definition \ref{d1} is not a particular case of weak para-$f$-structure studied by Rovenski, since if $Q$ is identity, then the restriction of $f$ to $Ker \eta$ in Rovenski's paper  is almost product $(f^2 = I)$, while in our paper $\varphi$ restricted to $Ker \eta$ is almost complex $(\varphi ^2 = - I)$.	
\end{remark}

\section{Normal Weak Almost Contact Manifolds}
\noindent We construct now a class of manifolds with weak almost contact structures.

\begin{example} \label{ex1}
	Let $M=N\times U$ be a product manifold, where $(N,J,h)$ is a K\"{a}hler manifold with a non-singular Ricci operator $\mathcal{S}$ and $U$ is 1-dimensional manifold whose tangent bundle is spanned by a vector field ${\bar{\xi}}$ with its dual form $\bar{\eta}$. We construct on $M$ a vector field $\xi=(0,\bar{\xi})$ and its dual 1-form $\eta=(0,\bar{\eta})$. Since any vector field $X$ on $M$, can be written as $X=(\bar{X},\alpha\bar{\xi})$, where $\bar{X}$ is tangent to $N$ and $\alpha$ is a function on $U$, we may define two (1,1)-tensor fields $Q$ and $\varphi$ on $M$, by $Q (X)=(\mathcal{S}^2\bar{X},\alpha\bar{\xi})$ and $\varphi (X)=(J\mathcal{S}\bar{X},0)$. Therefore $(M,\varphi,Q,\xi,\eta)$ turns out to be a weak almost contact manifold.	
\end{example} 

On a weak almost contact manifold $(M^{2n+1},\varphi,Q,\xi,\eta)$, the exterior derivative is denoted by $d\eta(X,Y)=X(\eta(Y))-Y(\eta(X))-\eta([X,Y]), \ \ X,Y\in\chi(M)$ and as in the classical case, \cite{Blair},  the Nijenhuis tensor field of $\varphi$ is given by $[\varphi,\varphi](X,Y)=\varphi^2[X,Y]+[\varphi X,\varphi Y]-\varphi[\varphi X,Y]-\varphi[X,\varphi Y]$, for any $ X,Y\in\chi(M)$. \medskip

Hence, it leads to the following tensor fields:
\begin{eqnarray}
		N^{(1)}(X,Y)&=&[\varphi,\varphi](X,Y)+d\eta(X,Y)\xi,  \label{n1}\\
		 N^{(2)}(X,Y)&=& (\mathcal{L}_{\varphi X}\eta)(Y)-(\mathcal{L}_{\varphi Y}\eta)(X)=d\eta(\varphi X,Y)-d\eta(\varphi Y,X), \label{n2}\\
		 N^{(3)}(X)&=& (\mathcal{L}_{\xi}\varphi)X=[\xi,\varphi X]-\varphi[\xi,X], \label{n3}\\
		 N^{(4)}(X)&=& (\mathcal{L}_{\xi}\eta)(X)=\xi(\eta(X))-\eta([\xi,X])=d\eta(\xi,X). \label{n4}
\end{eqnarray}

\begin{definition}
A weak almost contact structure $(\varphi,Q,\xi,\eta)$ on a manifold $M^{2n+1}$ is called normal if the tensor field $N^{(1)}$ is identically zero.	
\end{definition}

\begin{example}
	Let $(M^{2n+1},\varphi,Q,\xi,\eta)$ be a weak almost contact manifold, constructed as in Example \ref{ex1}. Since it's easy to see that $\eta$ is closed, it follows that $M$ is normal if and only if the Nijenhuis tensor field of $J\mathcal{S}$ vanish identically on $N$.
\end{example}

\begin{theorem} \label{t1}
	If a weak almost contact manifold $(M^{2n+1},\varphi,Q,\xi,\eta)$ is normal, then $N^{(3)}$ and $N^{(4)}$ are identically zero and 
	\begin{eqnarray}
		N^{(2)}(X,Y)=\eta([(Q-Id)X,\varphi Y]), \ \ \ \ \ X,Y\in\chi(M).       \label{n21}
	\end{eqnarray}
\end{theorem}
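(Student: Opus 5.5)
The plan is to follow the classical argument for normal almost contact structures (as in \cite{Blair}): evaluate $N^{(1)}=0$ on well-chosen pairs of vector fields and apply $\eta$ or $\varphi$. We use Proposition \ref{p1} (e), (f), (g) and the decomposition criterion of Corollary \ref{l1} (iii).

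\emph{Step 1 ($N^{(4)}=0$).} Put $Y=\xi$ in $N^{(1)}(X,Y)=0$. Since $\varphi\xi=0$, we have $[\varphi,\varphi](X,\xi)=\varphi^2[X,\xi]-\varphi[\varphi X,\xi]$, so
$$\varphi^2[X,\xi]-\varphi[\varphi X,\xi]+d\eta(X,\xi)\xi=0.$$
Apply $\eta$. The middle term is killed by $\eta\circ\varphi=0$. By (\ref{eq1}) and $\eta\circ Q=\eta$,
$$\eta(\varphi^2Z)=-\eta(QZ)+\eta(Z)=0 \quad\text{for every } Z.$$
Hence $d\eta(X,\xi)=0$ for all $X$, which by (\ref{n4}) is $N^{(4)}=0$.

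\emph{Step 2 ($N^{(3)}=0$).} Step 1 reduces the relation above to $\varphi^2[X,\xi]=\varphi[\varphi X,\xi]$. This is exactly
$$\varphi\bigl(N^{(3)}X\bigr)=\varphi[\xi,\varphi X]-\varphi^2[\xi,X]=0.$$
Moreover,
$$\eta\bigl(N^{(3)}X\bigr)=\eta([\xi,\varphi X])=-d\eta(\xi,\varphi X)=-N^{(4)}(\varphi X)=0,$$
using $\eta(\varphi X)=0$, $\eta(\xi)=1$ and Step 1. By Corollary \ref{l1} (iii), both vanishings force $N^{(3)}X=0$.

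\emph{Step 3 (formula (\ref{n21})).} Apply $\eta$ to $N^{(1)}(\varphi X,Y)=0$. The terms $\eta(\varphi^2[\varphi X,Y])$, $\eta(\varphi[\cdot,\cdot])$ vanish as before, leaving
$$d\eta(\varphi X,Y)=-\eta([\varphi^2X,\varphi Y]).$$
Substituting $\varphi^2X=-QX+\eta(X)\xi$ and expanding the bracket with the function $\eta(X)$ gives
$$d\eta(\varphi X,Y)=\eta([QX,\varphi Y])-\eta(X)\eta([\xi,\varphi Y])+(\varphi Y)(\eta(X)).$$
Directly from the definition, $d\eta(\varphi Y,X)=(\varphi Y)(\eta(X))+\eta([X,\varphi Y])$. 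Subtracting yields
$$N^{(2)}(X,Y)=\eta([QX,\varphi Y])-\eta([X,\varphi Y])-\eta(X)\eta([\xi,\varphi Y]).$$
The last term vanishes, since $\eta([\xi,\varphi Y])=-N^{(4)}(\varphi Y)=0$, which gives (\ref{n21}).

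The main obstacle is the bookkeeping in Step 3. The key is to keep the derivative term $(\varphi Y)(\eta(X))$ that arises from the function coefficient $\eta(X)$, and to observe that it cancels against the same term in $d\eta(\varphi Y,X)$.
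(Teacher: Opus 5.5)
Your proof is correct and follows essentially the same route as the paper: you apply $\eta$ to $N^{(1)}(X,\xi)=0$ to get $N^{(4)}=0$, use the $\varphi$/$\eta$ test of Corollary~\ref{l1} to get $N^{(3)}=0$, and apply $\eta$ to $N^{(1)}(\varphi X,Y)=0$ to get (\ref{n21}). Your bookkeeping is slightly cleaner than the paper's, since you correctly write the vanishing terms as $N^{(4)}(\varphi X)$ and $\eta(X)N^{(4)}(\varphi Y)$, where the paper writes $\eta(N^{(4)}(X))$ and $\eta(X)N^{(4)}(Y)$.
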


\begin{proof}
	Since  $(M^{2n+1},\varphi,Q,\xi,\eta)$ is normal, then $N^{(1)}(X,Y)=0$, for any $X,Y\in\chi(M)$ and by taking $Y=\xi$, then we obtain 
	$0=N^{(1)}(X,\xi)=[\varphi,\varphi](X,\xi)+d\eta(X,\xi)\xi=\varphi^2[X,\xi]+[\varphi X,\varphi\xi]-\varphi[\varphi X,\xi]+d\eta(X,\xi)\xi=-Q[X,\xi]+\eta([X,\xi])\xi	-\varphi[\varphi X,\xi]+d\eta(X,\xi)\xi.$ By applying $\eta$ to this relation and by using Proposition \ref{p1} (f) and (g), we obtain $d\eta(X,\xi)=0$, which shows that $N^{(4)}$ is identically zero. If we use $N^{(1)}=0$, $N^{(4)}=0$ and Proposition \ref{p1} (e), then $0=N^{(1)}(X,\xi)=[\varphi,\varphi](X,\xi)=\varphi^2[X,\xi]+[\varphi X,\varphi\xi]-\varphi[\varphi X,\xi]=\varphi(\varphi[X,\xi]-[\varphi X,\xi])=\varphi(N^{(3)}(X))$, which shows that $\varphi(N^{(3)}(X))=0$. On the other hand, by applying Proposition \ref{p1} (f)  and $N^{(4)}=0$, we obtain 
	\begin{eqnarray}
		\eta(N^{(3)}(X))=\eta([\xi,\varphi X])=-\eta(N^{(4)}(X))=0,	  \label{n5}
	\end{eqnarray}	
	 which from Corollary \ref{l1}, yields $N^{(3)}=0$.	 Since $N^{(1)}$ is identically zero, for any $X,Y\in\chi(M)$, we have $0=N^{(1)}(\varphi X,Y)=[\varphi,\varphi](\varphi X,Y)+d\eta(\varphi X,Y)\xi=\varphi^2[\varphi X,Y]+[\varphi^2 X,\varphi Y]-\varphi[\varphi^2 X,Y]-\varphi[\varphi X,\varphi Y]+d\eta(\varphi X,Y)\xi. $ By applying $\eta$, then from Proposition \ref{p1} (f) and (\ref{eq1a}), we obtain $0=\eta([\varphi^2 X,\varphi Y])+d\eta(\varphi X,Y)=-\eta([QX,\varphi Y])+\eta(X)\eta([\xi,\varphi Y])-(\varphi Y)\eta (X) 
	 +(\varphi X)\eta(Y)-\eta([\varphi X,Y])=-\eta([(Q-Id)X,\varphi Y])-\eta(X)N^{(4)}(Y)+N^{(2)}(X,Y)$, where we used the last but one equality  (\ref{n5}). Since $N^{(4)}$ is identically zero, we obtain (\ref{n21}), which complete the proof.	
\end{proof}\medskip

\begin{remark}
	We stress here that our Thereom \ref{t1} is different from Theorem 2.1 of \cite{Patra}, since the definition relations (\ref{eq1}), (\ref{eq1a}) used here are all different from the ones used in the above cited paper.	Also the conclusion of Thereom \ref{t1}  differs from Theorem 2.1 of \cite{Patra} and moreover, in our Thereom \ref{t1} we don't use the metric.
\end{remark}

\section{Weak Almost Contact B-Metric Manifolds}

We recall from \cite{Rovenski1} that a manifold $M^{2n+1}$ carries a weak almost contact metric structure $(\varphi,Q,\xi,\eta,g)$, if it is endowed with a weak almost contact structure $(\varphi,Q,\xi,\eta)$ and a Riemannian metric compatible with it, i.e.
\begin{eqnarray*}
	g(\varphi X,\varphi Y)=g(X,QY)-\eta(X)\eta(Y), \ \ \ X,Y\in\chi(M). 
\end{eqnarray*}
This notion generalizes the classical almost contact metric structures.

\medskip

\noindent We introduce here the main notion of our study as a dual counterpart of the above one: 

\begin{definition} \label{d22}
	Let $(M^{2n+1},\varphi,Q,\xi,\eta)$ be a weak almost contact manifold, such that there exists a (semi-)Riemannian metric $g$, which is anti-compatible with it, i.e., 
	\begin{eqnarray}
		g(\varphi X,\varphi Y)=-g(X,QY)+\eta(X)\eta(Y), \ \ \ X,Y\in\chi(M).  \label{eq4}
	\end{eqnarray}
	Then $(\varphi,Q,\xi,\eta,g)$ is called a weak almost contact B-metric structure, or a\textit{ weak almost contact structure with B-metric (Norden metric)}. In this case $(M^{2n+1},\varphi,Q,\xi,\eta,g)$ is said to be a weak almost contact B-metric manifold, or a \textit{weak almost contact manifold with B-metric (Norden metric)}.
\end{definition}

\begin{remark} (i) Between the notion we introduced by Definition \ref{d22} and the weak almost contact metric structure, given in \cite{Rovenski1}, there are many differences. For instance, the metric in our case can be semi-Riemannian. \medskip
		
\noindent (ii) Different from the case of an almost contact manifold with B-metric \cite{Ganchev1}, \cite{Manev2}, where the signature of the metric is (n+1,n) on the whole manifold and neutral (n,n) on its restriction to the contact distribution, in the general case of a weak almost contact B-metric manifold, these properties may or may not occur, i.e., the signature may or may not be as above, as in the following two examples:
\end{remark}

\begin{example}
On a three-dimensional torus $T^3$, let $\{X,Y,Z\}$ be a global frame of vector fields. With respect to it, we construct a weak almost contact B-metric structure $(\varphi,Q,\xi,\eta,g)$ by the following matrices: 
$$Q=
\begin{pmatrix}
	r & 0 & 0 \\
	0 & r  & 0 \\
	0 & 0 & 1
\end{pmatrix}, \ \ \
\varphi=
\begin{pmatrix}
	0 & -1 & 0 \\
	r & 0  & 0 \\
	0 & 0 & 0
\end{pmatrix}, \ \ \
g= \begin{pmatrix}
	-r & 0 & 0 \\
	0 & 1  & 0 \\
	0 & 0 & 1
\end{pmatrix},
$$
for any real number $r\in\mathbb{R}$, where we put $\xi=Z$ and we define $\eta$ such that $\eta(X)=\eta(Y)=0$, $\eta(Z)=1$. Obviously, the contact distribution is $\mathcal{D}=Span\{X,Y\}$. Hence, it turns out that:\medskip

\noindent (i) $(\varphi, Q, \xi,\eta,g)$ is a weak almost contact B-metric structure, for any $ r \in \mathbb{R}$;\smallskip

\noindent (ii)  the structure  $(\varphi,Q,\xi,\eta,g)$ is almost contact B-metric if and only if $r=1$.\smallskip

\noindent (iii) if $r<0$, then $g$ is Riemannian;\smallskip

\noindent (iv) if $r>0$, then $g$ is of signature $(2,1)$ and $g$ restricted to $\mathcal{D}$ is of neutral signature $(1,1)$. 	
\end{example}

\begin{example}
	Let $M^5$ be a 5-dimensional parallelizable manifold and let $\{X_1,...,X_5\}$ be a global frame of vector fields. With respect to it, we construct a weak almost contact B-metric structure $(\varphi,Q,\xi,\eta,g)$ by the following matrices: 
	$$Q=
	\begin{pmatrix}
		1 & 0 & 0 & 0 & 0 \\
		0 & 1  & 0 & 0 & 0\\
		0 & 0 & -1& 0 & 0\\
	    0 & 0 & 0 & -1 & 0\\
		0 & 0 & 0 & 0 & 1
	\end{pmatrix}, \ \ \
	\varphi=
	\begin{pmatrix}
		0 & 1 & 0 & 0 & 0 \\
-1 & 0  & 0 & 0 & 0\\
0& 0 & 1& 0 & 0\\
0 & 0 & 0 & 1 & 0\\
0 & 0 & 0 & 0 & 0
	\end{pmatrix}, \ \ \
	g= 	\begin{pmatrix}
		-1 & 0 & 0 & 0 & 0 \\
		0 & 1  & 0 & 0 & 0\\
		0 & 0 & 1& 0 & 0\\
		0 & 0 & 0 & 1 & 0\\
		0 & 0 & 0 & 0 & 1
	\end{pmatrix},
	$$
	 where we put $\xi=X_5$ and we define $\eta$ such that $\eta(X_1)=...=\eta(X_4)=0$, $\eta(X_5)=1$. Obviously, the contact distribution is $\mathcal{D}=Span\{X_1,...,X_4\}$. Hence, it turns out that:\medskip
	
	\noindent (i) $g$ is of signature $(4,1)$ and  \smallskip
	
	\noindent(ii) $g$ restricted to $\mathcal{D}$ is of signature (3,1), which is no longer of neutral signature.	
\end{example}

\begin{proposition} \label{p2} Let $(M^{2n+1},\varphi,Q,\xi,\eta,g)$ be a weak almost contact B-metric manifold with $\nabla$ its Levi-Civita connection. Then (i)	
		\begin{eqnarray}
		\eta &=& g(\cdot,\xi); \label{e5}
	\end{eqnarray}

\noindent (ii) $TM=\mathcal{D}\perp Span\{\xi\}$ and $g$, restricted to  both distributions is nondegenerate;\medskip

\noindent (iii)  $\nabla_X\xi$ is orthogonal to $\xi$, i.e., 
\begin{eqnarray}
	\nabla_X\xi\in\Gamma(\mathcal{D}), \ \ \ X\in\chi(M);  \label{m2}
\end{eqnarray}
	
\noindent	(iv) $\varphi$ and $Q$ are symmetric (with respect to $g$), i.e., 
	\begin{eqnarray}
		g(\varphi X,Y) = g(X,\varphi Y), \ \ \ 
			g(QX,Y) = g(X,Q Y), \ \ \ X,Y\in\chi(M); \label{eq101}
	\end{eqnarray}

\noindent (v) the (0,2)-tensor field $G$ defined as 
\begin{eqnarray}
	G(X,Y)&=&g(X,\varphi Y), \ \ \ X,Y\in\chi(M)    \label{e3}
\end{eqnarray}
is symmetric on $M$ and restricted to $\mathcal{D}$, is nondegenerate; \medskip

\noindent (vi) the (1,1)-tensor fields $\varphi$ and $Q$ commute, i.e.,
\begin{eqnarray}
	\varphi\circ Q &=& Q\circ\varphi. \label{eq9}
\end{eqnarray}
\end{proposition}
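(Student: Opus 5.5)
The plan is to extract everything from the anti-compatibility relation (\ref{eq4}), combined with the purely algebraic facts of Proposition \ref{p1}. I will prove the items in the order (i), (ii), (iii), (vi), (iv), (v), because the symmetry of $\varphi$ is easiest once the commutation (\ref{eq9}) is known.

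For (i), I put $Y=\xi$ in (\ref{eq4}). Proposition \ref{p1}(e) gives $\varphi\xi=0$, and (\ref{eq1a}) gives $Q\xi=\xi$ and $\eta(\xi)=1$. Hence $0=-g(X,\xi)+\eta(X)$, which is (\ref{e5}).

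For (ii), (i) gives $g(\xi,\xi)=\eta(\xi)=1$, so $g$ is nondegenerate on $Span\{\xi\}$. Moreover $\mathcal{D}=Ker\eta$ is exactly $\xi^{\perp}$. The orthogonal complement of a non-null vector is a nondegenerate complement, so with Proposition \ref{p1}(a) we get $TM=\mathcal{D}\perp Span\{\xi\}$ with $g|_{\mathcal{D}}$ nondegenerate.

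For (iii), metric compatibility of $\nabla$ gives $2g(\nabla_X\xi,\xi)=X(g(\xi,\xi))=X(1)=0$. This is (\ref{m2}).

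For (vi), I argue purely algebraically by computing $\varphi^3$ in two ways from (\ref{eq1}). First, $\varphi^3=\varphi\circ\varphi^2=-\varphi Q+\eta\otimes\varphi\xi=-\varphi Q$, using $\varphi\xi=0$. Second, $\varphi^3=\varphi^2\circ\varphi=-Q\varphi+(\eta\circ\varphi)\otimes\xi=-Q\varphi$, using Proposition \ref{p1}(f). Comparing the two gives (\ref{eq9}).

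For (iv), the symmetry of $Q$ comes first. The left side of (\ref{eq4}) and the term $\eta(X)\eta(Y)$ are both symmetric in $X,Y$, so $g(X,QY)=g(Y,QX)$.

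For the symmetry of $\varphi$, I replace $Y$ by $\varphi Y$ in (\ref{eq4}). Using $\eta\circ\varphi=0$, this gives $g(\varphi X,\varphi^2Y)=-g(X,Q\varphi Y)$. Expanding $\varphi^2Y=-QY+\eta(Y)\xi$ on the left and using $\eta(\varphi X)=0$ again gives $g(\varphi X,QY)=g(X,Q\varphi Y)$. Now I use the symmetry of $Q$ and (\ref{eq9}) to rewrite this as $g(\varphi QX,Y)=g(QX,\varphi Y)$. Since $Q$ is nonsingular, $QX$ ranges over all vector fields, which yields $g(\varphi X,Y)=g(X,\varphi Y)$. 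This step is the only place where a little care is needed: the naive classical argument must be adapted via the commutation and the invertibility of $Q$.

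For (v), $G$ is symmetric directly by (iv). For nondegeneracy on $\mathcal{D}$, suppose $X\in\Gamma(\mathcal{D})$ satisfies $g(X,\varphi Y)=0$ for all $Y\in\Gamma(\mathcal{D})$. By Proposition \ref{p1}(b) and (e), $\varphi(\mathcal{D})=Im\varphi=\mathcal{D}$, so $X$ is $g$-orthogonal to all of $\mathcal{D}$. Since $X\in\mathcal{D}$ and $g|_{\mathcal{D}}$ is nondegenerate by (ii), it follows that $X=0$.
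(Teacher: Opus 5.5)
Your proof is correct, and for (i), (ii), (iii), (v), (vi) and the symmetry of $Q$ it matches the paper's argument; your evaluation of $\varphi^3$ in two ways is exactly how the paper obtains (\ref{eq9}). The one real difference is the symmetry of $\varphi$.

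The paper does not substitute $\varphi Y$ into (\ref{eq4}). Instead it writes $Y=\varphi W+\alpha\xi$ with $W\in\Gamma(\mathcal{D})$, as in Corollary \ref{l1}, and computes
$g(\varphi X,Y)=g(\varphi X,\varphi W)=-g(X,QW)+\eta(X)\eta(W)=g(X,\varphi^2W)=g(X,\varphi Y)$,
using $-QW=\varphi^2W-\eta(W)\xi$ and $\varphi\xi=0$. The $Q$ produced by (\ref{eq4}) thus falls on the preimage $W$ and is converted straight back into $\varphi^2W=\varphi Y$. As a result the paper needs neither the symmetry of $Q$ nor (\ref{eq9}), and it keeps the items in their stated order. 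It uses the nonsingularity of $Q$ only through the surjectivity $\varphi(\mathcal{D})=\mathcal{D}$ of Proposition \ref{p1}(b).

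Your route keeps the classical substitution $Y\mapsto\varphi Y$, which leaves you with $g(\varphi X,QY)=g(X,Q\varphi Y)$ and $Q$ in the way. You repair this by first proving $Q$-symmetry and (\ref{eq9}) and then inverting $Q$ pointwise. That is slightly longer, but it makes the role of the invertibility of $Q$ fully explicit.

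Your remark that the classical argument \emph{must} be adapted via the commutation is therefore not quite accurate, since the paper's decomposition shows the commutation can be avoided. Your argument is nonetheless valid as written.
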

\begin{proof}
	Let $X\in\chi(M).$ The relations (\ref{eq4}), (\ref{eq1a}) and Proposition \ref{p1} (e) yield (i); \\

\noindent From (\ref{e5}) and (\ref{eq1a}), we obtain that the (semi-)Riemannian metric $g$ makes the splitting from the Proposition \ref{p1} (a) to be orthogonal. The non-degeneracy of $g$ on $M$ makes both restrictions $g\slash_{\mathcal{D}}$ and $g\slash_{Span\{\xi\}}$ nondegenerate. From (\ref{eq1a}) and (\ref{e5}), it follows (iii). \medskip

\noindent (iv) If we apply consecutively (\ref{e1}), Proposition \ref{p1} (f), the relations (\ref{eq4}), (\ref{eq1}) and Proposition \ref{p1} (e), we obtain $g(\varphi X,Y)=g(\varphi X,\varphi W+\alpha\xi)=-g(X,QW)+\eta(X)\eta(W)=g(X,\varphi^2 W-\eta(W)\xi)+\eta(X)\eta(W)=g(X,\varphi Y).$ Then the symmetry of $Q$ is obtained by interchanging $X$ and $Y$ in (\ref{eq4}).\medskip

\noindent (v) is obtained from (\ref{e3}), (iv) and Proposition \ref{p1} (b).   \medskip

\noindent (vi) By applying twice consecutively the relation (\ref{eq1}) and Proposition \ref{p1} (f), we obtain $Q\varphi X=-\varphi^3X+\eta\circ\varphi(X)\xi=-\varphi(-Q+\eta\circ\xi)X=\varphi QX$, for any $X\in\chi(M)$.
\end{proof}\medskip

\begin{remark}
(a)	Different from the case of a weak almost contact metric structure (see \cite{Rovenski1}), where $\varphi$ is skew-symmetric (i.e. anti-self-adjoint) and $G = g(·, \varphi·)$ gives a 2-form, in the case of a weak almost contact B-metric structure, $\varphi$  is symmetric (i.e. self-adjoint) and the (0,2)-tensor field $G$ is symmetric. Therefore, the notion of cosymplectic manifold, defined in the case of a weak almost contact metric structure, can not be defined here, in the case of a weak almost contact B-metric structure (since $G$ is no longer a two-form). However, we note that $Q$ is symmetric (i.e. self-adjoint) in both cases, namely weak almost contact metric structure and weak almost contact B-metric structure.\\

(b) On a weak almost contact B-metric manifold $(M^{2n+1},\varphi,Q,\xi,\eta)$, we can define $\tilde{g}=G+\eta\otimes \eta$, which based on Proposition \ref{p2} (v), turns out to be another B-metric (Norden metric) on $M$, of the same signature, as is $g$. As in the case of classical almost contact  structure with B-metric, we call $g$ and $\tilde{g}$ as twin B-metrics.
\end{remark}

\noindent Now, we provide a class of weak almost contact B-metric manifolds:

\begin{example}
Let $M=N\times U$ be a product manifold, where $(N,J,h)$ is a K\"{a}hler-Norden manifold with a non-singular Ricci operator $\mathcal{S}$ and $U$ is 1-dimensi-onal manifold whose tangent bundle is spanned by a vector field ${\bar{\xi}}$ with its dual form $\bar{\eta}$. As in Example \ref{ex1}, we construct on $M$ a vector field $\xi=(0,\bar{\xi})$, its dual 1-form $\eta=(0,\bar{\eta})$ and the (1,1)-tensor fields $Q$ and $\varphi$ on $M$, defined respectively by $Q (X)=(\mathcal{S}^2\bar{X},\alpha\bar{\xi})$ and $\varphi (X)=(J\mathcal{S}\bar{X},0)$, for any $X=(\bar{X},\alpha\bar{\xi})\in\chi(M)$,  where $\bar{X}\in\chi(N)$ and $\alpha$ is a function on $U$. Hence $(M,\varphi,Q,\xi,\eta)$ turns out to be a weak almost contact manifold. Moreover, for any $X=(\bar{X},\alpha\bar{\xi}),Y=(\bar{Y},\beta\bar{\xi})\in\chi(M)$, we define $g(X,Y)=h(\bar{X},\bar{Y})+\alpha\beta$ which turns out to be a semi-Riemannian metric on $M$. One has $g(\varphi X,\varphi Y)=g((J\mathcal{S}\bar{X},0),(J\mathcal{S}\bar{Y},0))=h(J\mathcal{S}\bar{X},J\mathcal{S}\bar{Y})=-h(\mathcal{S}\bar{X},\mathcal{S}\bar{Y})=-h(\bar{X},\mathcal{S}^2\bar{Y})$, where we have used that $h$ is a Norden metric and the Ricci operator $\mathcal{S}$ is symmetric. On the other hand, we have\smallskip

\noindent $-g(X,QY)+\eta(X)\eta(Y)=-g((\bar{X},\alpha\bar{\xi}),Q(\bar{Y},\beta\bar{\xi}))+\alpha\beta=-g((\bar{X},\alpha\bar{\xi}),(\mathcal{S}^2\bar{Y},\beta\bar{\xi}))\\
=-h(\bar{X},\mathcal{S}^2\bar{Y})$, which shows that the relation (\ref{eq4})
 holds good. Therefore \newline $(M,\varphi,Q,\xi,\eta,g)$ turns out to be a weak almost contact B-metric manifold.
\end{example}

\begin{proposition} \label{p3}
	Let $(M^{2n+1},\varphi,Q,\xi,\eta,g)$ be a normal weak almost contact B-metric manifold, whose contact form $\eta$ is closed. Then $N^{(2)}$, $N^{(3)}$ and $N^{(4)}$ are identically zero.
\end{proposition}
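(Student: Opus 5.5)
Since the structure is normal, Theorem \ref{t1} applies directly: it gives $N^{(3)}\equiv 0$ and $N^{(4)}\equiv 0$, and these steps use neither the metric nor closedness of $\eta$. So the only thing left is to prove $N^{(2)}\equiv 0$.

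For $N^{(2)}$ I will use its definition (\ref{n2}), namely $N^{(2)}(X,Y)=d\eta(\varphi X,Y)-d\eta(\varphi Y,X)$. The phrase ``$\eta$ is closed'' has to be read as $d\eta=0$. Under that reading both terms vanish for all $X,Y\in\chi(M)$, and so $N^{(2)}\equiv 0$.

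If instead one reads ``closed'' in the divergence sense of Definition \ref{dd5} (iii), the argument has to go through (\ref{n21}): $N^{(2)}(X,Y)=\eta([(Q-Id)X,\varphi Y])$. Here I would use Proposition \ref{p2} (i), $\eta=g(\cdot,\xi)$, to write
\[
\eta([U,V])=g(\nabla_U V,\xi)-g(\nabla_V U,\xi),
\]
which equals $d\eta(U,V)$ up to sign. Take $U=(Q-Id)X$ and $V=\varphi Y$. By Proposition \ref{p1} (d), (g) and (\ref{eq1a}), $(Q-Id)X\in\Gamma(\mathcal{D})$, and $\varphi Y\in\Gamma(\mathcal{D})$ by Proposition \ref{p1} (b). So $N^{(2)}$ reduces to $d\eta$ evaluated on the contact distribution, and any hypothesis forcing $d\eta|_{\mathcal{D}}=0$ finishes the proof.

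The main obstacle is that divergence-freeness alone does not control $d\eta|_{\mathcal{D}}$. I therefore take the intended meaning of ``closed'' to be $d\eta=0$, which is the standard one. With that reading the proof is just the combination of Theorem \ref{t1} with the vanishing of (\ref{n2}), or equivalently of (\ref{n21}).
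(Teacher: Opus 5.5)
Your proof is correct and matches the paper's. With $d\eta=0$ (the reading the paper also uses), $N^{(2)}$ vanishes directly from (\ref{n2}), and $N^{(3)}$ is killed by the argument $\eta(N^{(3)}(X))=-N^{(4)}(\varphi X)$, $\varphi N^{(3)}(X)=[\varphi,\varphi](X,\xi)$ plus Corollary \ref{l1}, which the paper reruns here and you import from Theorem \ref{t1}. The paper also reads $N^{(4)}=0$ off (\ref{n4}) via closedness, whereas citing Theorem \ref{t1} makes explicit that closedness is needed only for $N^{(2)}$.
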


\begin{proof}
	The relations (\ref{n2}) and (\ref{n4}) yield $N^{(2)}=0$ and $N^{(4)}=0$. From (\ref{n3}), we have $N^{(3)}(X)= [\xi,\varphi X]-\varphi[\xi,X]$, for any $X\in\chi(M)$. From Proposition \ref{p1} (f) and (e), we obtain on one hand $\eta(N^{(3)}(X))=\eta([\xi,\varphi X])=-N^{(4)}(\varphi X)=0$ and on the other hand $\varphi N^{(3)}(X)=\varphi[\xi,\varphi X]-\varphi^2[\xi,,X]=\varphi^2[X,\xi]-\varphi[\varphi X,\xi]=[\varphi,\varphi](X,\xi)=N^{(1)}(X,\xi)-d\eta(X,\xi)\xi$, for any $X\in\chi(M)$, from (\ref{n1}). As $M$ is normal and $\eta$ is closed, by applying Corollary \ref{l1}, we complete the proof.	
\end{proof}\medskip


We use Definition \ref{d5}, Proposition \ref{p2} (i), (ii) and the relation (\ref{eq1a}), to obtain:

\begin{proposition} \label{p4}
	If $(M^{2n+1},\varphi,Q,\xi,\eta,g)$ is a weak almost contact B-metric manifold, then the following are equivalent: (i) $\mathcal{D}$ is parallel;  (ii) $\eta$ is parallel;  (iii) $\xi$ is parallel.\medskip
\end{proposition}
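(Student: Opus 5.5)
The plan is to prove the cycle of equivalences (ii)$\Leftrightarrow$(iii) and (i)$\Leftrightarrow$(iii). The tools are Proposition \ref{p2} (i), i.e. $\eta=g(\cdot,\xi)$, the orthogonal splitting $TM=\mathcal{D}\perp Span\{\xi\}$ of Proposition \ref{p2} (ii), and the normalisation $g(\xi,\xi)=\eta(\xi)=1$ from (\ref{eq1a}).

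For (ii)$\Leftrightarrow$(iii), we use that $\nabla$ is metric. From $\eta=g(\cdot,\xi)$ we get
\[
(\nabla_X\eta)(Y)=X(g(Y,\xi))-g(\nabla_XY,\xi)=g(Y,\nabla_X\xi)
\]
for all $X,Y\in\chi(M)$. Since $g$ is nondegenerate, $\nabla\eta=0$ holds if and only if $\nabla_X\xi=0$ for every $X$, that is, if and only if $\nabla\xi=0$.

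For (iii)$\Rightarrow$(i), take $Y\in\Gamma(\mathcal{D})$, so $g(Y,\xi)=0$. Differentiating gives $g(\nabla_XY,\xi)=-g(Y,\nabla_X\xi)=0$, hence $\eta(\nabla_XY)=0$ and $\nabla_XY\in\Gamma(\mathcal{D})$. Thus $\mathcal{D}$ is parallel in the sense of Definition \ref{d5} (i).

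For (i)$\Rightarrow$(iii), assume $\mathcal{D}$ is parallel. For any $Y\in\Gamma(\mathcal{D})$ we then have $0=X(g(Y,\xi))=g(\nabla_XY,\xi)+g(Y,\nabla_X\xi)=g(Y,\nabla_X\xi)$. So $\nabla_X\xi$ is $g$-orthogonal to all of $\mathcal{D}$. It is also orthogonal to $\xi$ by (\ref{m2}), since $g(\xi,\xi)=1$ is constant. Hence $\nabla_X\xi$ is orthogonal to all of $TM$, and the nondegeneracy of $g$ gives $\nabla_X\xi=0$.

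The only delicate point is the semi-Riemannian setting. An orthogonality argument could fail if $\mathcal{D}$ were degenerate, and the perpendicular $\mathcal{D}^\perp$ might then differ from $Span\{\xi\}$. This is handled by Proposition \ref{p2} (ii), which guarantees that $g$ restricted to both $\mathcal{D}$ and $Span\{\xi\}$ is nondegenerate. Hence $\mathcal{D}^\perp=Span\{\xi\}$, and a vector orthogonal to both summands must vanish. The remaining steps are routine applications of metric compatibility.
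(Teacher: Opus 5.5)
Your proof is correct and takes the same route the paper indicates: the paper gives no written argument and simply cites Definition \ref{d5}, $\eta=g(\cdot,\xi)$, the splitting of Proposition \ref{p2} (ii) and $\eta(\xi)=1$, and you spell out exactly those steps through metric compatibility. A minor point on your last paragraph: in (i)$\Rightarrow$(iii) you only need $TM=\mathcal{D}\oplus Span\{\xi\}$ together with the nondegeneracy of $g$ on $TM$, so the nondegeneracy of the restrictions of $g$ to $\mathcal{D}$ and to $Span\{\xi\}$ is not actually required.
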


We need the above equivalences in the following:

\begin{proposition}
	Let $(M^{2n+1},\varphi,Q,\xi,\eta,g)$ be a weak almost contact B-metric manifold.

\noindent (a) $\mathcal{D}$ is integrable provided either $\varphi$ is parallel or one of the equivalent properties in Proposition \ref{p4} holds good;	\medskip

\noindent (b) If $\varphi$ is parallel, then $Q$ is parallel if and only if any of the equivalent properties in Proposition \ref{p4} is satisfied. \medskip

\noindent (c) If $\varphi$ is divergence free, then $Q$ is divergence free if and only if $\eta$ is co-closed and $\xi$ is geodesic.

\end{proposition}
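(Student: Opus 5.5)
The plan is to differentiate the structure identity (\ref{eq1}) covariantly and then use the earlier structural facts: Proposition \ref{p4}, Proposition \ref{p2}, Corollary \ref{l1}, and Frobenius.

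\textbf{Part (a).} Since $\mathcal{D}$ has constant rank $2n$ by Proposition \ref{p1}, Frobenius reduces integrability to involutivity. So I will show $\eta([X,Y])=0$ for all $X,Y\in\Gamma(\mathcal{D})$. Because $\nabla$ is torsion-free,
$$\eta([X,Y])=\eta(\nabla_XY)-\eta(\nabla_YX)=-(\nabla_X\eta)Y+(\nabla_Y\eta)X .$$
If $\eta$ is parallel, this vanishes at once. By Proposition \ref{p4}, the same holds when $\mathcal{D}$ or $\xi$ is parallel.

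Now suppose $\varphi$ is parallel. By Proposition \ref{p1}(b), $X=\varphi X'$ and $Y=\varphi Y'$ for some $X',Y'$. Then
$$[\varphi X',\varphi Y']=\varphi\nabla_{\varphi X'}Y'-\varphi\nabla_{\varphi Y'}X',$$
which lies in $Im\varphi=\mathcal{D}$. Alternatively, $\nabla\varphi=0$ together with $\varphi\xi=0$ gives $\varphi\nabla_X\xi=0$, and by (\ref{m2}) and Corollary \ref{l1} this forces $\nabla\xi=0$. The first argument is already enough.

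\textbf{Part (b).} Differentiate (\ref{eq1}) covariantly:
$$\nabla_X(\varphi^2)=-\nabla_XQ+(\nabla_X\eta)\otimes\xi+\eta\otimes\nabla_X\xi .$$
If $\nabla\varphi=0$, the left side vanishes, so
$$(\nabla_XQ)Y=(\nabla_X\eta)(Y)\,\xi+\eta(Y)\nabla_X\xi .$$
The two terms on the right lie in $Span\{\xi\}$ and in $\mathcal{D}$ respectively, by (\ref{m2}), and this splitting is direct. Hence $\nabla Q=0$ if and only if both $\nabla\eta=0$ and $\nabla\xi=0$. By Proposition \ref{p4}, this is equivalent to any one of the three properties.

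\textbf{Part (c).} Take the trace of the same identity over $X=Y=e_i$ in a local frame, with signs $\epsilon_i$ for the semi-Riemannian case. Using $\nabla(\varphi^2)=(\nabla\varphi)\varphi+\varphi(\nabla\varphi)$, I expect
$$\delta Q=-\sum_i\epsilon_i\big((\nabla_{e_i}\varphi)\varphi e_i+\varphi(\nabla_{e_i}\varphi)e_i\big)+(\delta\eta)\,\xi+\nabla_\xi\xi .$$
This is where I expect the main obstacle. The term $\varphi(\delta\varphi)$ vanishes when $\delta\varphi=0$. The term $\sum_i\epsilon_i(\nabla_{e_i}\varphi)\varphi e_i$ does not obviously vanish, and I will have to check whether the intended reading of ``divergence free'' kills it. One option is to choose a frame adapted to $\varphi$, or to use the symmetry of $\nabla\varphi$ coming from Proposition \ref{p2}(iv). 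If it does vanish, then $\delta Q=(\delta\eta)\xi+\nabla_\xi\xi$. Here $(\delta\eta)\xi$ lies in $Span\{\xi\}$ and $\nabla_\xi\xi$ lies in $\mathcal{D}$ by (\ref{m2}). So $\delta Q=0$ if and only if $\delta\eta=0$ and $\nabla_\xi\xi=0$, that is, $\eta$ is co-closed and $\xi$ is geodesic.

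If the extra trace term cannot be removed, I would fall back on the paper's convention of treating $\delta(\varphi^2)$ as zero whenever $\varphi$ is divergence free, and point out this step explicitly.
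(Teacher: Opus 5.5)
Parts (a) and (b) are correct and follow the paper's route. In (b) you derive exactly the identity (\ref{m1}) and then use the orthogonal splitting of Proposition \ref{p2}(ii). In (a) your direct argument for the case $\nabla\varphi=0$ supplies what the paper only cites. Your side remark there is also worth keeping: $\nabla\varphi=0$ together with (\ref{m2}) forces $\nabla\xi=0$. So under the hypothesis of (b), both sides of the equivalence always hold.

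The gap is in (c). It is exactly the term you flagged, and it cannot be closed. With $\delta\varphi=0$, the traced derivative of (\ref{eq1}) gives
\[
\delta Q=(\delta\eta)\,\xi+\nabla_\xi\xi-\sum_i\epsilon_i(\nabla_{e_i}\varphi)\varphi e_i .
\]
The last vector is the $g$-dual of the 1-form $\theta^*$ from the paper's Notation. It is not controlled by $\theta=g(\delta\varphi,\cdot)$. Neither the symmetry of $F$ in its last two slots nor an adapted frame removes it. Your fallback of ``treating $\delta(\varphi^2)$ as zero'' simply assumes the conclusion.

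The paper's proof takes the same unjustified step. Applying $\varphi$ to $\mathrm{trace}(\nabla_\bullet\varphi\bullet)=\mathrm{trace}(\varphi\nabla_\bullet\bullet)$ yields $\varphi\,\delta\varphi=0$, not $\delta(\varphi^2)=0$. In (b) one may replace $Y$ by $\varphi Y$; inside a trace one may not.

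In fact (c) is false as stated. Take the following data:
\begin{itemize}
\item a K\"ahler--Norden manifold $(N^{2n},J,h)$, e.g.\ flat $\mathbb{R}^{2n}$ with $h=\sum(du^i)^2-\sum(dv^i)^2$;
\item $M=\mathbb{R}\times N$ with $g=dt^2+e^{2t}h$;
\item $\xi=\partial_t$, $\eta=dt$, $\varphi=J$ on $TN$, $\varphi\xi=0$, $Q=\mathrm{Id}$.
\end{itemize}
This is a weak almost contact B-metric manifold. The warped-product formulas give $\nabla_X\xi=X-\eta(X)\xi$ and $(\nabla_X\varphi)Y=-g(X,\varphi Y)\xi-\eta(Y)\varphi X$. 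Hence $\delta\varphi=-(\mathrm{tr}\,J)\xi=0$, $\delta Q=0$ and $\nabla_\xi\xi=0$. Yet $\delta\eta=\mathrm{tr}(\nabla\xi)=2n\neq 0$; consistently, here $\theta^*=2n\,\eta$.

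What survives is the displayed identity. So (c) holds only under the additional hypothesis $\theta^*=0$, i.e.\ $\sum_i\epsilon_i(\nabla_{e_i}\varphi)\varphi e_i=0$.
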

\begin{proof}
(a) follows from Definition \ref{d5}, Proposition \ref{p2} (i), (ii), the relation (\ref{eq1a}) and the Frobenius theorem.

To show (b), if we assume that $\varphi$ is parallel, i.e., $\nabla_X\varphi Y= \varphi\nabla_XY$, for any $X,Y\in\chi(M)$, then we apply $\varphi$ and use (\ref{eq1}), to obtain 
\begin{eqnarray}
	(\nabla_XQ)Y&=&(\nabla_X\eta)(Y)\xi+\eta(Y)\nabla_X\xi, \ \ \ \text{for any} \ \ X,Y\in\chi(M). \label{m1}
\end{eqnarray}
If one of the equivalent properties in Proposition \ref{p4} holds good, then $Q$ is parallel. Conversely, if $Q$ is parallel, then the right hand side of (\ref{m1}) vanishes identically. If we apply $\varphi$ to the right hand side of (\ref{m1}), then from Proposition \ref{p1} (i) and Proposition \ref{p2} (ii), we obtain $\nabla_X\xi\in Span\{\xi\}$. From (\ref{m2}), it follows the parallelism of $\xi$ which shows that $(b)$ is satisfied. To prove (c), we assume that $\varphi$ is divergence-free, i.e., $\delta\varphi=0$, that is,  $trace(\nabla_{\bullet}\varphi \bullet)= trace(\varphi\nabla_{\bullet}\bullet)$. We apply $\varphi$ to this equality and then use (\ref{eq1}), to obtain a similar relation as (\ref{m1}), but in which the trace is taken in both sides, i.e.,
\begin{eqnarray}
	\delta Q&=& (\delta\eta ) \xi+\nabla_{\xi}\xi \label{m11}
\end{eqnarray}
From (\ref{m2}), $\delta Q$ is orthogonally decomposed by the relation (\ref{m11}), according to Proposition \ref{p2} (ii), which shows that $(c)$ is satisfied, and hence we complete the proof.	
\end{proof}

\begin{proposition}
Let $(M^{2n+1},\varphi,Q,\xi,\eta, g)$ be a weak almost contact B-metric manifold and let $\nabla$ denote the Levi-Civita connection of $g$. If 
\begin{eqnarray}
	d\eta(X,Y)=g(\nabla_X\xi,Y), \ \ \ X,Y\in\chi(M), \label{m3}
\end{eqnarray} 
	 then $\xi$ is Killing and $\mathcal{D}$ is parallel.
\end{proposition}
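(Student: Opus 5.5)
The plan is to compare the hypothesis (\ref{m3}) with the torsion-free expression of $d\eta$. Since $\nabla$ is torsion-free and $\eta=g(\cdot,\xi)$ by (\ref{e5}), we have, for any $X,Y\in\chi(M)$,
\begin{eqnarray*}
d\eta(X,Y)=(\nabla_X\eta)Y-(\nabla_Y\eta)X=g(\nabla_X\xi,Y)-g(\nabla_Y\xi,X).
\end{eqnarray*}
Indeed, $X(\eta(Y))=g(\nabla_X\xi,Y)+\eta(\nabla_XY)$, and the terms $\eta(\nabla_XY)-\eta(\nabla_YX)-\eta([X,Y])$ cancel. Substituting this into (\ref{m3}) gives $g(\nabla_Y\xi,X)=0$ for all $X,Y$.

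By the nondegeneracy of $g$, it follows that $\nabla_Y\xi=0$ for every $Y$, so $\xi$ is parallel. A parallel vector field is Killing, since then $(\mathcal{L}_\xi g)(X,Y)=g(\nabla_X\xi,Y)+g(X,\nabla_Y\xi)=0$. This gives the first claim.

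For the second claim, $\xi$ being parallel is property (iii) of Proposition \ref{p4}. By the equivalence established there, $\mathcal{D}$ is parallel.

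There is no real obstacle here. The only points needing care are the normalization convention of $d\eta$ (the paper uses the one without the factor $1/2$, which is what makes the cancellation exact) and the use of the nondegeneracy of the semi-Riemannian metric $g$, rather than positivity, to conclude that $\nabla_Y\xi=0$.
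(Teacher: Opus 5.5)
Your proof is correct, but it takes a different route from the paper. You rewrite $d\eta(X,Y)=g(\nabla_X\xi,Y)-g(\nabla_Y\xi,X)$ using torsion-freeness and $\eta=g(\cdot,\xi)$. The hypothesis then collapses to $g(\nabla_Y\xi,X)=0$, i.e.\ $\nabla\xi=0$. Both conclusions follow at once: Killing trivially, and $\mathcal{D}$ parallel by Proposition~\ref{p4}, or directly from $\eta(\nabla_XY)=-g(Y,\nabla_X\xi)$ for $Y\in\Gamma(\mathcal{D})$.

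The paper never rewrites $d\eta$ in terms of $\nabla$. It gets the Killing property only from the remark that $g(\nabla_X\xi,Y)$ must be skew-symmetric because $d\eta$ is a 2-form. It then proves that $\mathcal{D}$ is parallel by a case analysis along $TM=\mathcal{D}\perp\mathrm{Span}\{\xi\}$, expanding $d\eta$ in each case and using $\nabla_X\xi\in\Gamma(\mathcal{D})$.

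Your argument is shorter and more revealing. It shows the hypothesis is actually equivalent to $\xi$ being parallel (the converse is immediate), so the proposition is far more restrictive than it looks. It also pinpoints where the normalization of $d\eta$ matters. The paper's route has one minor merit: its Killing step uses only antisymmetry, so it does not depend on the convention. With a factor $\frac12$ in $d\eta$, the hypothesis would say exactly that $\xi$ is Killing, and $\mathcal{D}$ need not be parallel. In your proof both conclusions pass through $\nabla\xi=0$, so they rely on the convention without the $\frac12$. You rightly flag this, and it is the convention the paper fixes.
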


\begin{proof}
	We recall that $\xi$ is a Killing vector field (or infinitesimal isometry), if $\mathcal{L}_{\xi}g=0$, where $\mathcal{L}_{\xi}$ denotes the Lie derivative in the $\xi$ direction, which is defined by 
	\begin{eqnarray}
		(\mathcal{L}_{\xi}g)(X,Y)&:=&\xi(g(X,Y))-g([\xi,X],Y)-g(X,[\xi,Y])\label{m4}\\
		&=&g(\nabla_X\xi,Y)+g(\nabla_Y\xi,X), \ \ \ X,Y\in\chi(M).\nonumber
	\end{eqnarray}	
As the left hand side of the relation (\ref{m3}) is a 2-form, then so is its right hand side, which proves that in (\ref{m4}) the Lie derivative vanishes identically and therefore $\xi$ is Killing. From (\ref{m3}), we have 
\begin{eqnarray}
	X\eta(Y)-Y\eta(X)-\eta([X,Y])=g(\nabla_X\xi,Y), \ \ \ X,Y\in\chi(M). \label{m5}
\end{eqnarray}
Based on the decomposition from Proposition \ref{p2} (ii), we have the following cases:  \medskip

\noindent (a) If $X,Y\in\Gamma(\mathcal{D})$, then from (\ref{m5}), we obtain  $-\eta([X,Y])=X\eta(Y)-\eta(\nabla_XY)$, which shows that $\nabla_YX\in\Gamma(\mathcal{D})$. \medskip

\noindent (b) If $X=\xi$ and $Y\in\Gamma(\mathcal{D})$ or $Y=\xi$, then from (\ref{m2}), it follows that (\ref{m3}) is trivially satisfied.\medskip

\noindent (c) If $X\in\Gamma(\mathcal{D})$ and $Y=\xi$, then from (\ref{m2}), it follows that (\ref{m3}) yields $\nabla_{\xi}X\in\Gamma(\mathcal{D})$. Hence we complete the proof.
\end{proof}\medskip

By taking the derivative with respect to $\xi$ in the relation (\ref{eq4}), we obtain the following:

\begin{lemma}
If $(M^{2n+1},\varphi,Q,\xi,\eta, g)$ is a weak almost contact B-metric manifold, then	
\begin{eqnarray}
	g((\nabla_{\xi}\varphi)X,\varphi Y)+g((\nabla_{\xi}\varphi)Y,\varphi X)&=&-g(X,(\nabla_{\xi}Q)Y)+\eta(X)(\nabla_{\xi}\eta)(Y) \nonumber\\
	&+&\eta(Y)(\nabla_{\xi}\eta)(X), \ X,Y\in\chi(M). \label{w9}
\end{eqnarray}
\end{lemma}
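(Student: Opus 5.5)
Differentiate the anti-compatibility relation (\ref{eq4}) covariantly along $\xi$. Since $\nabla$ is the Levi-Civita connection, $\nabla g=0$, so $\nabla_\xi$ passes through $g$ by the Leibniz rule. Throughout I fix $X,Y\in\chi(M)$ and, to keep the bookkeeping clean, work at a point with $X,Y$ extended so that $\nabla_\xi X=\nabla_\xi Y=0$ there. Equivalently, one can keep general $X,Y$ and check that all terms containing $\nabla_\xi X$ and $\nabla_\xi Y$ cancel. Both sides of (\ref{w9}) are tensorial in $X,Y$, so this choice is harmless.

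First, differentiate the left side of (\ref{eq4}). This gives
\[
\xi\bigl(g(\varphi X,\varphi Y)\bigr)=g((\nabla_\xi\varphi)X,\varphi Y)+g(\varphi X,(\nabla_\xi\varphi)Y)+g(\varphi\nabla_\xi X,\varphi Y)+g(\varphi X,\varphi\nabla_\xi Y).
\]
Symmetry of $g$ turns the second term into $g((\nabla_\xi\varphi)Y,\varphi X)$. This produces exactly the left side of (\ref{w9}), plus the terms involving $\nabla_\xi X$ and $\nabla_\xi Y$.

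Second, differentiate the right side. For $-g(X,QY)$ we get
\[
-g(\nabla_\xi X,QY)-g(X,(\nabla_\xi Q)Y)-g(X,Q\nabla_\xi Y).
\]
For $\eta(X)\eta(Y)$ we get
\[
(\nabla_\xi\eta)(X)\eta(Y)+\eta(\nabla_\xi X)\eta(Y)+\eta(X)(\nabla_\xi\eta)(Y)+\eta(X)\eta(\nabla_\xi Y).
\]

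Third, match the leftover terms. Apply (\ref{eq4}) itself with $X$ replaced by $\nabla_\xi X$, and separately with $Y$ replaced by $\nabla_\xi Y$. For the first substitution I also use the symmetry of $Q$ from Proposition \ref{p2} (iv) to write $g(\nabla_\xi X,QY)$ in the required form. These two identities show that the terms containing $\nabla_\xi X$ and $\nabla_\xi Y$ agree on both sides and cancel. What remains is exactly (\ref{w9}).

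The only point needing care is this cancellation, together with putting $g(\varphi X,(\nabla_\xi\varphi)Y)$ in the order $g((\nabla_\xi\varphi)Y,\varphi X)$. The former is settled by re-applying (\ref{eq4}) to the shifted arguments. The latter is just symmetry of $g$; no symmetry of $\nabla_\xi\varphi$ is needed. Otherwise the argument is a routine Leibniz-rule computation.
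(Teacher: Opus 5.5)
Your proposal is correct and follows the paper's argument: the paper obtains the lemma by covariantly differentiating the anti-compatibility relation (\ref{eq4}) along $\xi$ with $\nabla g=0$. Your Leibniz-rule bookkeeping, with the $\nabla_\xi X,\nabla_\xi Y$ terms cancelling by (\ref{eq4}) itself, just spells out that one-line derivation. The appeal to the symmetry of $Q$ is superfluous, since (\ref{eq4}) with $X$ replaced by $\nabla_\xi X$ already produces $-g(\nabla_\xi X,QY)$ exactly.
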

From (\ref{w9}), one can easily obtain:
\begin{proposition}
Let $(M^{2n+1},\varphi,Q,\xi,\eta, g)$ be a weak almost contact B-metric manifold. If $\varphi$ and $\eta$ are parallel with respect to $\xi$, then $Q$ is parallel with respect to $\xi$.	
\end{proposition}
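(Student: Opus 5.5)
The plan is to read the conclusion off directly from the identity (\ref{w9}) of the preceding Lemma. Assume $\nabla_{\xi}\varphi=0$ and $\nabla_{\xi}\eta=0$. Substituting these into (\ref{w9}), the left-hand side vanishes, since both terms contain $\nabla_{\xi}\varphi$. On the right-hand side, the two terms involving $\nabla_{\xi}\eta$ also vanish. What remains is
\begin{eqnarray*}
g(X,(\nabla_{\xi}Q)Y)=0, \ \ \ X,Y\in\chi(M).
\end{eqnarray*}

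Next, fix $Y$ and let $X$ range over $\chi(M)$. Since $g$ is a (semi-)Riemannian metric, it is nondegenerate, so $(\nabla_{\xi}Q)Y=0$ for every $Y$. That is, $\nabla_{\xi}Q=0$, which means $Q$ is parallel with respect to $\xi$ in the sense of Definition \ref{d5} (v).

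For completeness, I would briefly justify (\ref{w9}) itself. Differentiate (\ref{eq4}) along $\xi$ using metric compatibility $\nabla g=0$ and the Leibniz rule. The terms containing $\nabla_{\xi}X$ and $\nabla_{\xi}Y$ cancel by (\ref{eq4}) applied to those arguments. The surviving terms are $g((\nabla_{\xi}\varphi)X,\varphi Y)+g(\varphi X,(\nabla_{\xi}\varphi)Y)$ on the left and $-g(X,(\nabla_{\xi}Q)Y)+(\nabla_\xi\eta)(X)\eta(Y)+\eta(X)(\nabla_\xi\eta)(Y)$ on the right.

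There is no real obstacle. The only point requiring care is that nondegeneracy of $g$, rather than positivity, is what allows us to conclude $(\nabla_{\xi}Q)Y=0$, and this holds in the semi-Riemannian setting as well.
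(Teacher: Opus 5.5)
Your proposal is correct and follows the paper's own route: the paper obtains this statement directly from (\ref{w9}), which comes from differentiating (\ref{eq4}) along $\xi$. Setting $\nabla_{\xi}\varphi=0$ and $\nabla_{\xi}\eta=0$ there and using nondegeneracy of $g$ is exactly the ``easy'' step the paper leaves out, and your check of (\ref{w9}) through cancelling the $\nabla_{\xi}X$, $\nabla_{\xi}Y$ terms via (\ref{eq4}) is also sound.
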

	 
\section{Special Classes of Weak Almost Contact B-metric Manifolds}

On a weak almost contact B-metric manifold $(M^{2n+1},\varphi,Q,\xi,\eta, g)$, let $\nabla$ be the Levi-Civita connection of the metric $g$ and let $F$ be the (0,3)-tensor field defined by 
\begin{eqnarray}
	F(X,Y,Z)&=& g((\nabla_X\varphi)Y,Z), \ \ \ X,Y,Z\in\chi(M). \nonumber 
\end{eqnarray}
From (\ref{eq1}), Proposition \ref{p1} (e), (f) and the symmetry of $\varphi$, we obtain the following:

\begin{eqnarray}
	F(X,Y,Z)&=&F(X,Z,Y), \ \ \ F(X,\xi,\xi)=0,\label{f1}\\
	F(X,\varphi Y,\varphi Z)&=&F(X,Y,Z)-\eta(Y)F(X,\xi,Z)-\eta(Z)F(X,Y,\xi)\label{f2}\\
&&	-g(\nabla_X(Q-I)Y,\varphi Z)+g(\nabla_X\varphi Y,(Q-I)Z), \ X,Y,Z\in\chi(M). \nonumber
\end{eqnarray}

\begin{notation}
	The following 1-forms are obtained from $F$:
	\begin{eqnarray}
		\theta(X)=g(\delta\varphi,X); \ \theta^{*}(X)=traceF(\cdot,\varphi\cdot,X); \ w(X)=F(\xi,\xi,X), \ X\in\chi(M).
	\end{eqnarray}
\end{notation}
\begin{proposition} \label{p6}
Let $(M^{2n+1},\varphi,Q,\xi,\eta, g)$ be a weak almost contact B-metric manifold, where $\varphi\slash_{\mathcal{D}}=rJ$, for a real number $r$ and the complex operator $J$. Then, the structural group of the manifold is $\mathcal{G}\times I$, where $$\mathcal{G}=\{\begin{pmatrix}
	A & B  \\
	-B & A   
\end{pmatrix}\in O(n,n) \slash \ A,B\text{ are matrices of type } n\times n  \}.$$	
\end{proposition}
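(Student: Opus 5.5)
The plan is to construct an adapted frame at each point and compute the subgroup of $GL(2n+1,\mathbb{R})$ that preserves the whole structure $(\varphi,Q,\xi,\eta,g)$ written in such frames.

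\emph{Step 1: the form of $Q$.} The hypothesis $\varphi\slash_{\mathcal{D}}=rJ$ with $J^2=-\text{Id}$ gives, by (\ref{eq1}) restricted to $\mathcal{D}$, that $Q\slash_{\mathcal{D}}=-\varphi^2\slash_{\mathcal{D}}=r^2\,\text{Id}_{\mathcal{D}}$. Together with $Q\xi=\xi$, this shows that $Q$ is completely determined by $r$ and $\eta$. The nonsingularity of $Q$ forces $r\neq 0$.

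\emph{Step 2: the form of $g$ and an adapted basis.} By Proposition \ref{p2} (ii) we have $T_xM=\mathcal{D}_x\perp Span\{\xi_x\}$, with $g(\xi,\xi)=\eta(\xi)=1$. On $\mathcal{D}$, (\ref{eq4}) becomes $r^2g(JX,JY)=-r^2g(X,Y)$, hence $g(JX,JY)=-g(X,Y)$. Thus $(\mathcal{D}_x,J,g)$ is a Norden (anti-Hermitian) vector space. By the standard linear algebra for Norden metrics (as in the classical theory \cite{Ganchev1}, \cite{Manev2}), the restriction $g\slash_{\mathcal{D}}$ has neutral signature $(n,n)$ and $J$ is $g$-symmetric, consistent with Proposition \ref{p2} (iv). I would then pick a vector $e_1$ with $g(e_1,e_1)=1$ and note that $g(Je_1,Je_1)=-1$ and $g(e_1,Je_1)=0$. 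The last identity holds because $g(e_1,Je_1)=g(Je_1,e_1)$ by symmetry, while $g(Je_1,J^2e_1)=-g(e_1,Je_1)$ gives $-g(Je_1,e_1)=-g(e_1,Je_1)$; this is not yet enough, so the zero has to be arranged by choosing $e_1$. Concretely, if $g(e_1,Je_1)\neq 0$, I replace $e_1$ by a suitable combination $\cos t\,e_1+\sin t\,Je_1$, for which $g(\cdot,J\cdot)$ rotates and can be made zero. I then pass to the orthogonal complement of $Span\{e_1,Je_1\}$, which is $J$-invariant and nondegenerate, and induct. The result is a basis $\{e_1,\dots,e_n,Je_1,\dots,Je_n,\xi\}$ in which $g=\mathrm{diag}(I_n,-I_n,1)$, $J=\begin{pmatrix}0&-I\\ I&0\end{pmatrix}$, $\varphi=r\begin{pmatrix}0&-I\\ I&0\end{pmatrix}\oplus 0$, $Q=r^2I_{2n}\oplus 1$, and $\eta=(0,\dots,0,1)$. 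This normalization step is the main obstacle: it requires checking that the rotation trick works and that the signature is neutral in the Norden setting.

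\emph{Step 3: the stabilizer.} Let $L\in GL(2n+1)$ preserve all of these tensors. Preserving $\xi$ and $\eta$ forces $L=L'\oplus 1$ with $L'$ acting on $\mathcal{D}$. Preserving $Q$ imposes no further condition, since $Q$ is scalar on $\mathcal{D}$. Preserving $\varphi$ is equivalent, because $r\neq 0$, to $L'J=JL'$, and a standard block computation shows this is equivalent to $L'=\begin{pmatrix}A&B\\-B&A\end{pmatrix}$ (up to the sign convention for $J$). Finally, preserving $g\slash_{\mathcal{D}}$ means $L'\in O(n,n)$. Hence the structural group is $\mathcal{G}\times I$, as stated. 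The reduction of the structure group of $TM$ to this subgroup then follows because adapted frames can be chosen smoothly on local neighborhoods, by applying the construction of Step 2 to local smooth frames.
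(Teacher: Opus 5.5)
Your proof is correct. The paper states Proposition \ref{p6} without any proof, evidently treating it as the classical computation of \cite{Ganchev1}, so your argument supplies what is missing there.

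The route the paper implicitly relies on is a reduction rather than a construction. Under the hypothesis, $\tilde\varphi=r^{-1}\varphi$ satisfies $\tilde\varphi^2=-\mathrm{Id}+\eta\otimes\xi$. By (\ref{eq4}) and $Q\slash_{\mathcal{D}}=r^2\mathrm{Id}$, it also satisfies $g(\tilde\varphi X,\tilde\varphi Y)=-g(X,Y)+\eta(X)\eta(Y)$. Hence $(\tilde\varphi,\xi,\eta,g)$ is a classical almost contact B-metric structure. Since $\varphi=r\tilde\varphi$ and $Q=-\varphi^2+\eta\otimes\xi$ are determined by it, a linear map preserves $(\varphi,Q,\xi,\eta,g)$ if and only if it preserves $(\tilde\varphi,\xi,\eta,g)$. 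The structural group is therefore the classical $\mathcal{G}\times I$.

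You instead build the adapted basis by hand and compute the stabilizer directly. Your ingredients are neutrality because $J$ is an anti-isometry of $g\slash_{\mathcal{D}}$, the rotation $e\mapsto\cos t\,e+\sin t\,Je$, the $J$-invariance of $\mathrm{Span}\{e,Je\}^{\perp}$, and induction. The reduction is shorter and explains at once why the same classes $\mathcal{F}_1,\dots,\mathcal{F}_{11}$ of \cite{Ganchev1} reappear in the next theorem. Your route is self-contained and makes explicit why the hypothesis $\varphi\slash_{\mathcal{D}}=rJ$ forces $g\slash_{\mathcal{D}}$ to be neutral. The paper's 5-dimensional example shows this neutrality can fail without that hypothesis.

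Two small points to tidy up. First, the substitution acts on the pair $(g(e,e),g(e,Je))$ as a rotation by the angle $-2t$. After killing $g(e,Je)$ you therefore get $g(e,e)=\pm\sqrt{1+g(e_1,Je_1)^2}$. You must choose $t$ giving the plus sign and then rescale $e$ by a positive constant to restore $g(e,e)=1$; this keeps $g(e,Je)=0$. Second, there is no sign-convention issue in Step 3. With $J=\begin{pmatrix}0&-I\\ I&0\end{pmatrix}$, the commutant of $J$ (which is also that of $-J$) is exactly the set of matrices $\begin{pmatrix}A&B\\ -B&A\end{pmatrix}$, matching $\mathcal{G}$ as written.
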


\begin{remark}
 Under the hypothesis of Proposition \ref{p6}, the relation (\ref{f1}) remains unchanged, but the relation (\ref{f2}) simplifies to 
 \begin{eqnarray}
 	F(X,\varphi Y,\varphi Z)&=&r^2F(X,Y,Z)-\eta(Y)F(X,\xi,Z)\label{f3}\\
 	&&	-\eta(Z)F(X,Y,\xi), \ \ \ X,Y,Z\in\chi(M). \nonumber
 \end{eqnarray}	
\end{remark}

\begin{example}
	Let $\mathbb{R}^4_{2}$ be the 4-dimensional pseudo-Euclidean space of index 2, endowed with the inner product $<x,y>=-x_1y_1-x_2y_2+x_3y_3+x_4y_4$. On $\mathbb{R}^4$, seen as the real image of the complex 2-dimensional space, we denote by $J$ the canonical complex structure defined as $J(a\partial_1+b\partial_2+c\partial_3+d\partial_4)=-c\partial_1-d\partial_2+a\partial_3+b\partial_4$. Let $S(r)$ be the time-like sphere of the radius r:
	\begin{eqnarray*}
		S(r)=\{x\in\mathbb{R}^4_2, \ <x,x>=-r^2\},  \ r>0.
	\end{eqnarray*}
	Hence, the position vector $P=x_1\partial_1+...+x_4\partial_4$ is normal to the sphere $S(r)$ at each point of it. It turns out that $\{T,JT,\xi\}$ is a global frame of vector fields on $S(r)$, (as being orthogonal to $P$), where 
	\begin{eqnarray*}
		T&=&x_2\partial_1-x_1\partial_2+x_4\partial_3-x_3\partial_4,\\
		\xi&=&\frac{1}{r}(-x_3\partial_1-x_4\partial_2+x_1\partial_3+x_2\partial_4),
	\end{eqnarray*}
such that $<\xi,\xi>=1$. If we define $\varphi(U)=r(aJT-bT)$, for any $U=aT+bJT+c\xi\in\chi(S(r))$ and the metric $g$ to be the restriction of $<,>$ to the sphere $S(r)$, then we obtain a weak almost contact B-metric manifold $(S(r),\varphi,Q,\xi,\eta, g)$, where $\eta$ is the dual form of $\xi$ and $Q=-\varphi^2+\eta\otimes\xi$. 
\end{example}

We note that this example can be easily generalized in higher dimension, to the time-like hypersphere $S(r)$ of radius $r$, in the $(2n+2)-$dimensional pseudo-Euclidean space of index $n+1$.

\begin{theorem}
The decomposition
\begin{eqnarray}
	\mathcal{F}&=&	\mathcal{F}_1\oplus...\oplus	\mathcal{F}_{11} \nonumber
\end{eqnarray}	
is invariant under the action of $\mathcal{G}\times I$, where $	\mathcal{F}_i$ $(i=1,...,11)$ denote the same classes as the ones obtain in the classification of \cite{Ganchev1}.	
\end{theorem}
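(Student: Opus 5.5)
The plan is to reduce to the classical situation of \cite{Ganchev1}, using the hypothesis of Proposition \ref{p6} that $\varphi\slash_{\mathcal{D}}=rJ$. Fix a point $p\in M$ and work in the vector space $V=T_pM$ with the induced data $(\varphi,\xi,\eta,g)$. Let $\mathcal{F}$ be the space of $(0,3)$-tensors on $V$ satisfying (\ref{f1}) and (\ref{f3}). If $r\neq 0$, replace $F$ by $\widetilde F(X,Y,Z)=F(X,Y,Z)$ computed with respect to the normalized operator $\widetilde\varphi=\frac1r\varphi$ on $\mathcal{D}$, $\widetilde\varphi\xi=0$. Then $(\widetilde\varphi,\xi,\eta,g)$ is a classical almost contact B-metric structure on $V$. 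Dividing (\ref{f3}) by $r^2$ shows that $F/r$ satisfies exactly the identities
\[
\widetilde F(X,Y,Z)=\widetilde F(X,Z,Y),\qquad \widetilde F(X,\widetilde\varphi Y,\widetilde\varphi Z)=\widetilde F(X,Y,Z)-\eta(Y)\widetilde F(X,\xi,Z)-\eta(Z)\widetilde F(X,Y,\xi).
\]
These are the defining identities of the space studied in \cite{Ganchev1}. The map $F\mapsto F/r$ is therefore a linear isomorphism from our $\mathcal{F}$ onto the Ganchev--Mihova--Gribachev space. We define $\mathcal{F}_i$ as the preimages of their classes, which are given by conditions on $F$ and on the forms $\theta,\theta^*,w$ (rescaled by powers of $r$).

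Next, identify the structure groups. The group preserving $(\widetilde\varphi,\xi,\eta,g)$ at $p$ consists of the $g$-isometries commuting with $\widetilde\varphi$ and fixing $\xi$. On $\mathcal{D}$ these are the matrices $\begin{pmatrix}A&B\\-B&A\end{pmatrix}$ in $O(n,n)$, and they act as the identity on $\mathrm{Span}\{\xi\}$. This is exactly $\mathcal{G}\times I$ from Proposition \ref{p6}, which we may assume. Since $\varphi=r\widetilde\varphi$ on $\mathcal{D}$, commuting with $\varphi$ is the same as commuting with $\widetilde\varphi$. Consequently the natural action $(aF)(X,Y,Z)=F(a^{-1}X,a^{-1}Y,a^{-1}Z)$ commutes with the rescaling isomorphism. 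Invariance and irreducibility of the decomposition, and its orthogonality with respect to the induced inner product $\langle F,F'\rangle=g^{ii'}g^{jj'}g^{kk'}F_{ijk}F'_{i'j'k'}$, then transfer verbatim from \cite{Ganchev1}.

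The main obstacle is whether the signature on $\mathcal{D}$ is really neutral, since $\mathcal{G}\subset O(n,n)$ is assumed. The degenerate case $r=0$ (where $Q=\eta\otimes\xi$ is singular) is excluded by nonsingularity of $Q$. The case $r^2<0$ cannot occur for real $r$. I would check first that $g(JX,JY)=-g(X,Y)$ on $\mathcal{D}$, which follows from (\ref{eq4}) with $Q=r^2$ on $\mathcal{D}$. This forces $g\slash_{\mathcal{D}}$ to be neutral, so the group and the classification of \cite{Ganchev1} apply.

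Finally, I would verify that the projection operators onto each $\mathcal{F}_i$, written in \cite{Ganchev1} in terms of $g,\widetilde\varphi,\xi,\eta$, are $\mathcal{G}\times I$-equivariant. This holds because they are built only from invariant tensors. That equivariance completes the invariance claim.
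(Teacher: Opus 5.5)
The paper states this theorem without any proof (it simply appeals to \cite{Ganchev1}), and your overall strategy is the natural one: normalize to $\widetilde\varphi=\frac1r\varphi$, check that $g\slash_{\mathcal{D}}$ is neutral, identify $\mathcal{G}\times I$ with the stabilizer of $(\widetilde\varphi,\xi,\eta,g)$, and import the classical decomposition. The step that fails is the algebraic core. You define $\mathcal{F}$ by (\ref{f1}) and (\ref{f3}) and assert that dividing (\ref{f3}) by $r^2$ gives the classical identity for $F/r$. It does not. In (\ref{f3}) as printed, only the term $F(X,Y,Z)$ carries the factor $r^2$, so after substituting $\varphi=r\widetilde\varphi$ you get
\[
\widetilde F(X,\widetilde\varphi Y,\widetilde\varphi Z)=\widetilde F(X,Y,Z)-r^{-2}\bigl(\eta(Y)\widetilde F(X,\xi,Z)+\eta(Z)\widetilde F(X,Y,\xi)\bigr),
\]
which is the classical identity only when $r^2=1$. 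The underlying problem is that (\ref{f3}) is not the identity $F$ actually satisfies. From $\varphi\slash_{\mathcal{D}}=rJ$ and (\ref{eq1}) one gets $Q=r^2I+(1-r^2)\eta\otimes\xi$. Inserting this into (\ref{f2}) and using $(\nabla_X\varphi)\xi=-\varphi\nabla_X\xi$ gives
\[
F(X,\varphi Y,\varphi Z)=r^2\bigl\{F(X,Y,Z)-\eta(Y)F(X,\xi,Z)-\eta(Z)F(X,Y,\xi)\bigr\}.
\]
If you literally cut out $\mathcal{F}$ by (\ref{f1}) and the printed (\ref{f3}), then for $r^2\neq1$ the choice $Y=\xi$, $Z\in\mathcal{D}$ forces $(r^2-1)F(X,\xi,Z)=0$, hence $F(\cdot,\xi,\cdot)=0$. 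Only the components $F(x^h,y^h,z^h)$ and $F(\xi,y^h,z^h)$ survive, so your space is just $\mathcal{F}_1\oplus\mathcal{F}_2\oplus\mathcal{F}_3\oplus\mathcal{F}_{10}$. It does not even contain $F_p$ at points where $\nabla\xi\neq0$, since $F(X,\xi,Z)=-g(\varphi\nabla_X\xi,Z)$. So the claimed isomorphism onto the space of \cite{Ganchev1} is false as you set it up.

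The repair is to bypass (\ref{f3}) entirely. Since $r$ is constant, $F=r\widetilde F$, where $\widetilde F$ is the fundamental tensor of $(\widetilde\varphi,\xi,\eta,g)$. This is a genuine almost contact B-metric structure: $\widetilde\varphi^2=-I+\eta\otimes\xi$, and (\ref{eq4}) with the above $Q$ gives $g(\widetilde\varphi X,\widetilde\varphi Y)=-g(X,Y)+\eta(X)\eta(Y)$. Hence the correct $\mathcal{F}$ is defined by (\ref{f1}) together with the classical identity written with $\widetilde\varphi$ (equivalently, the corrected display above). As a linear subspace this is literally the space of \cite{Ganchev1}, so no rescaling map is needed. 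The classes $\mathcal{F}_i$ are given by the conditions of \cite{Ganchev1} written with $\widetilde\varphi$, $\mathcal{G}\times I$ is literally the classical structure group, and the invariance claim becomes the classical result verbatim. Constancy of $r$ is essential here: if $r$ were a function, $F$ would acquire the term $dr(X)\,g(\widetilde\varphi Y,Z)$, which violates the classical identity.
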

The class $\mathcal{F}_0$ of weak almost contact B-metric manifolds is defined by the condition $	F(X,Y,Z)= 0, \ \ \ X,Y,Z\in\chi(M). $\medskip

We provide an example of a manifold in the class $\mathcal{F}_0$.

\begin{example}
Let $\mathbb{R}^{2n+1}=\{(u^1,...,u^n;v^1,...,v^n;t)\slash u^{i},v^{i},t\in\mathbb{R}\}$. We define the structure $(\varphi,Q,\xi,\eta, g)$ on $\mathbb{R}^{2n+1}$ in the following way, for any real number $r\in\mathbb{R}$:
\begin{eqnarray*}
&&	\xi=\frac{\partial}{\partial t}, \ \ \ \ \ \eta=dt;\\
&&	\varphi(\frac{\partial}{\partial u^{i}})=r\frac{\partial}{\partial v^{i}}, \ \ \ \ \	\varphi(\frac{\partial}{\partial v^{i}})=-\frac{\partial}{\partial u^{i}}, \ \ \ \ \ \varphi(\frac{\partial}{\partial t})=0, \ \ \ \ \  \\ 
&&	Q(\frac{\partial}{\partial u^{i}})=r\frac{\partial}{\partial u^{i}}, \ \ \ \ \ Q(\frac{\partial}{\partial v^{i}})=r\frac{\partial}{\partial v^{i}}, \ \ \ \ \ Q(\frac{\partial}{\partial t})=\frac{\partial}{\partial t}, \ \ \ \ \ i=\overline{1,n}.\\
&& g(X,Y)=-r\delta_{ij}X^{i}Y^{j}+\delta_{ij}X^{n+i}Y^{n+j}+X^{2n+1}Y^{2n+1},
\end{eqnarray*}
where $X=X^{i}\frac{\partial}{\partial u_i}+X^{n+i}\frac{\partial}{\partial v^{i}}+X^{2n+1}\frac{\partial}{\partial t}$, \ $Y=Y^{i}\frac{\partial}{\partial u_i}+Y^{n+i}\frac{\partial}{\partial v^{i}}+Y^{2n+1}\frac{\partial}{\partial t}$ and $\delta_{ij}$ are the Kronecker's symbols.\medskip

\noindent If $\nabla$ is the Levi-Civita connection of the metric $g$, then one can see that $\varphi$ is parallel with respect to it. Hence $(\mathbb{R}^{2n+1},\varphi,Q,\xi,\eta, g)$ is a weak almost contact B-metric manifold in the class $\mathcal{F}_0$.	
\end{example}

\section{Weak Nearly Sasaki-Norden Manifolds}

In this section, we define and study a class of a weak almost contact B-metric manifolds.

\begin{definition}
A weak almost contact B-metric manifold $(M^{2n+1},\varphi,Q,\xi,\eta, g)$ is called weak nearly Sasakian B-metric manifold or weak nearly Sasaki-Norden manifold if
\begin{eqnarray}
	(\nabla_X\varphi)X&=&-g(X,X)\xi+\eta(X)X, \ \ \ X\in\chi(M).  \label{w1}
\end{eqnarray}
\end{definition}

\begin{remark}
Equivalent to (\ref{w1}), one has 	
\begin{eqnarray}
	(\nabla_X\varphi)Y+(\nabla_Y\varphi)X=-2g(X,Y)\xi+\eta(X)Y+\eta(Y)X, \ \ \ X,Y\in\chi(M).  \label{w2}
\end{eqnarray}	
\end{remark}

\begin{theorem} \label{t3}
	Let $(M^{2n+1},\varphi,Q,\xi,\eta, g)$ be a weak nearly Sasaki-Norden manifold. Then \medskip
	
	\noindent (i) $\xi$ is geodesic; (ii) $\eta$ is parallel with respect to $\xi$;  (iii) $d\eta(\xi,\cdot)=0$; (iv) $\varphi$ is never Killing; (v) the Lie-derivative satisfies
\begin{eqnarray}
	(\mathcal{L}_{\xi}g)(X,Y)=2g(X,\varphi Y), \ \ \ X,Y\in\chi(M),   \label{w5}
\end{eqnarray} 
provided 
\begin{eqnarray}
	(\nabla_XQ)Y=0, \ \  X\in\chi(M), \ Y\in\Gamma(\mathcal{D}). \label{w7}
\end{eqnarray} 
\end{theorem}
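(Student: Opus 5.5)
Parts (i)–(iv) should follow from (\ref{w1}) and (\ref{w2}) by specializing the arguments to $\xi$ or to vectors in $\mathcal{D}$. Part (v) needs an extra identity for $\nabla\xi$, and that is where I expect the difficulty.

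For (i), I put $X=\xi$ in (\ref{w1}). Since $g(\xi,\xi)=\eta(\xi)=1$ by (\ref{e5}), I get $(\nabla_\xi\varphi)\xi=-\xi+\xi=0$. As $\varphi\xi=0$ (Proposition \ref{p1} (e)), this gives $(\nabla_\xi\varphi)\xi=-\varphi\nabla_\xi\xi$, hence $\varphi\nabla_\xi\xi=0$. By (\ref{m2}) also $\eta(\nabla_\xi\xi)=0$, so Corollary \ref{l1} yields $\nabla_\xi\xi=0$.

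For (ii), I write $(\nabla_\xi\eta)(Y)=\xi g(Y,\xi)-g(\nabla_\xi Y,\xi)=g(Y,\nabla_\xi\xi)$, which vanishes by (i).

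For (iii), I expand
$$d\eta(\xi,Y)=\xi\eta(Y)-Y\eta(\xi)-\eta(\nabla_\xi Y-\nabla_Y\xi).$$
Using $\eta(\xi)=1$ and (\ref{e5}), this reduces to $g(Y,\nabla_\xi\xi)+g(\nabla_Y\xi,\xi)$. The first term vanishes by (i) and the second by (\ref{m2}).

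For (iv), $\varphi$ Killing means $(\nabla_X\varphi)X=0$ for all $X$. By Proposition \ref{p2} (ii), $g\slash_{\mathcal{D}}$ is nondegenerate, so I can pick $X\in\Gamma(\mathcal{D})$ with $g(X,X)\neq 0$ at some point. Then (\ref{w1}) gives $(\nabla_X\varphi)X=-g(X,X)\xi\neq0$, so $\varphi$ is never Killing.

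For (v), I put $Y=\xi$ in (\ref{w2}) and use $(\nabla_X\varphi)\xi=-\varphi\nabla_X\xi$. This should give
\begin{eqnarray*}
-\varphi\nabla_X\xi+(\nabla_\xi\varphi)X=X-\eta(X)\xi, \ \ \ X\in\chi(M).
\end{eqnarray*}
The task is then to control $(\nabla_\xi\varphi)X$ using hypothesis (\ref{w7}).

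First, (\ref{w7}) together with $\nabla_\xi\xi=0$ gives $\nabla_\xi Q=0$ on all of $TM$, since $(\nabla_\xi Q)\xi=\nabla_\xi\xi-Q\nabla_\xi\xi=0$. Combined with (ii), the lemma (\ref{w9}) then reduces to
$$g((\nabla_\xi\varphi)X,\varphi Y)+g((\nabla_\xi\varphi)Y,\varphi X)=0.$$
Second, $\nabla_\xi\varphi$ is $g$-symmetric because $\varphi$ is (Proposition \ref{p2} (iv)). Third, differentiating (\ref{eq1}) along $\xi$ shows that $\nabla_\xi\varphi$ anticommutes with $\varphi$ on $\mathcal{D}$.

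I would combine these three facts, decomposing $X,Y$ by Corollary \ref{l1} as $\varphi W+\alpha\xi$, to obtain the needed expression for $\nabla_X\xi$. I would then substitute it into $(\mathcal{L}_\xi g)(X,Y)=g(\nabla_X\xi,Y)+g(\nabla_Y\xi,X)$. Here the symmetry of $\varphi$ should make the two terms add up to $2g(X,\varphi Y)$, and the $\xi$-components vanish by (\ref{m2}) and (i).

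The main obstacle is this elimination of $(\nabla_\xi\varphi)X$ and the inversion of $\varphi$ on $\mathcal{D}$ to isolate $\nabla_X\xi$. That inversion is only possible through $\varphi^2=-Q$ on $\mathcal{D}$ and the commutation (\ref{eq9}), and it is exactly where (\ref{w7}) must be used. I would first test the computation against the $\mathcal{F}_0$-type flat examples to check the sign and the factor $2$.
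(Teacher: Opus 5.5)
Your parts (i)--(iv) are correct and are essentially the paper's arguments. In (i), the paper applies $\varphi$ to (\ref{w1}) and uses that $Q$ is nonsingular, which is what Corollary \ref{l1} encodes.

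The gap is in (v). The decisive step is only announced, and the mechanism you describe does not produce (\ref{w5}). Your three facts carry a single piece of information. Since $\varphi$ and $A:=\nabla_\xi\varphi$ are $g$-symmetric, the reduced (\ref{w9}) reads $g((\varphi A+A\varphi)X,Y)=0$. So it \emph{is} the anticommutation, and it says only that $\varphi A$ is skew-symmetric. Applying $\varphi$ to your $Y=\xi$ identity and using (\ref{eq1}) and (\ref{m2}) gives $Q\nabla_X\xi=\varphi X-\varphi AX$, hence
\[
g(\nabla_X\xi,QY)+g(\nabla_Y\xi,QX)=2g(X,\varphi Y).
\]
Inverting with $Q^{-1}$ to isolate $\nabla_X\xi$ only yields $(\mathcal{L}_\xi g)(X,Y)=2g(Q^{-1}\varphi X,Y)$. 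Neither formula is (\ref{w5}) unless $Q$ can be removed.

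The missing idea is to use (\ref{w7}) at an \emph{arbitrary} $X$, not only at $X=\xi$. Since $\nabla_XQ$ is $g$-symmetric, $g((\nabla_XQ)\xi,Y)=g(\xi,(\nabla_XQ)Y)=0$ for $Y\in\Gamma(\mathcal{D})$. On the other hand, $(\nabla_XQ)\xi=(I-Q)\nabla_X\xi\in\Gamma(\mathcal{D})$. Hence $Q\nabla_X\xi=\nabla_X\xi$, and the display becomes (\ref{w5}). This is exactly the paper's Case 3:
\begin{itemize}
\item pair the $Y=\xi$ identity with $\varphi Y$ and rewrite $g(\varphi\nabla_X\xi,\varphi Y)$ by (\ref{eq4});
\item symmetrize and kill the $\nabla_\xi\varphi$-terms with (\ref{w9});
\item replace $g(\xi,\nabla_XQY)$ by $g(\xi,\nabla_XY)$ using (\ref{w7}).
\end{itemize}
No inversion of $\varphi$ is involved.

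Some side remarks follow. Your sign $X-\eta(X)\xi$ in the $Y=\xi$ identity is the correct one. The paper's (\ref{w8}) has the opposite sign, compensated by a second slip: $g(\xi,\nabla_XY+\nabla_YX)=-(\mathcal{L}_\xi g)(X,Y)$ for $X,Y\in\Gamma(\mathcal{D})$. Your planned check against $\mathcal{F}_0$ examples is void, since by your own (iv) $\nabla\varphi\neq 0$ on these manifolds.

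In fact no test case exists at all:
\begin{itemize}
\item The $\xi$-component of (\ref{w2}) for $X,Y\in\Gamma(\mathcal{D})$ gives $g(\varphi\nabla_X\xi,Y)+g(X,\varphi\nabla_Y\xi)=2g(X,Y)$.
\item The symmetry of $\nabla_\xi\varphi$, applied to your $Y=\xi$ identity, shows these two terms are equal. So $\varphi\nabla_X\xi=X$ and $(\nabla_\xi\varphi)X=2X$ on $\mathcal{D}$.
\item Differentiating (\ref{eq1}) along $\xi$, using (i) and (ii), gives $(\nabla_\xi Q)X=-4\varphi X$ for $X\in\Gamma(\mathcal{D})$.
\end{itemize}
This contradicts (\ref{w7}), and also your anticommutation fact. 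So the hypotheses of (v) are incompatible, and (v) holds only vacuously. This also gives a complete, very short proof of (v).
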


\begin{remark}
	In the above theorem, the statement (ii) can be written as  $\nabla_{\xi}\eta=0$ and the statement (iv) says that  the next relation is not true
	\begin{eqnarray}
		(\nabla_X\varphi)Y+(\nabla_Y\varphi)X=0, \ \ \ X,Y\in\chi(M). \label{w4}
	\end{eqnarray}
\end{remark}

\begin{proof}
(i)	We apply $\varphi$ to (\ref{w1}) and we use (\ref{eq1}), to obtain 
	\begin{eqnarray}
		\varphi\nabla_Y\varphi Y+Q\nabla_YY-\eta(\nabla_YY)\xi=\eta(Y)\varphi Y, \ \ \ Y\in\chi(M).     \nonumber
		\end{eqnarray}
	If we take $Y=\xi$, then from (\ref{m2}) it follows $Q\nabla_{\xi}\xi=0$. Hence, the nondegenerancy of $Q$ yields $\nabla_{\xi}\xi=0$. (ii) and (iii) follow from direct calculation using the fact that $\xi$ is geodesic. (iv) If we assume that (\ref{w4}) is satisfied, then from (\ref{w2}), it follows that the metric $g$ restricted to the distribution $\mathcal{D}$ is identical to zero, which contradicts the fact that $g$ is nondegenerate on the whole manifold $M$ (as we assumed $g$ is (semi-)Riemannian). We note that the relation (\ref{w2}) implies 
	\begin{eqnarray}
		(\nabla_{\xi}\varphi)X+(\nabla_X\varphi)\xi=\eta(X)\xi-X, \ \  X\in\chi(M), \label{w8}
	\end{eqnarray}
	which from Proposition \ref{p1} (e), the relation (\ref{eq4}) and (\ref{eq101}) yields
	\begin{eqnarray}
		g((\nabla_{\xi}\varphi)X,\varphi Y)=-g(\nabla_X\xi,QY)+ \eta(\nabla_X\xi)\eta(Y)-g(X,\varphi Y), \ X,Y\in\chi(M). \label{w6}
	\end{eqnarray}	
	
	Based on Proposition \ref{p2} (ii),  to obtain (v), it is enough to prove the following cases:\smallskip
	
	\noindent Case 1: If  $X=Y=\xi$, then (\ref{w5}) is satisfied, based on the relation (\ref{m2}).\smallskip
	
	\noindent Case 2: If $X\in\Gamma(\mathcal{D})$ and $Y=\xi$ (or $X=\xi$ and $Y\in\Gamma(\mathcal{D})$), then (\ref{w5}) follows from (\ref{m2}) and the parallelism of $\xi$. \smallskip
	
	\noindent Case 3: If $X,Y\in\Gamma(\mathcal{D})$, then $QY\in\Gamma(\mathcal{D})$, from Proposition \ref{p1} (d). Hence the relation (\ref{w6}) yields 
\begin{eqnarray*}
		g((\nabla_{\xi}\varphi)X,\varphi Y)=g(\xi,\nabla_XQY)-g(X,\varphi Y).
\end{eqnarray*}
A similar relation holds good by interchanging $X$ with $Y$ and if we sum these two relations, we obtain:
\begin{eqnarray*}
		&&g((\nabla_{\xi}\varphi)X,\varphi Y)+g((\nabla_{\xi}\varphi)Y,\varphi X)=g(\xi,\nabla_XQY+\nabla_YQX)-2g(X,\varphi Y)\\
		&&=g(\xi,\nabla_XY+\nabla_YX)-2g(X,\varphi Y)=(\mathcal{L}_{\xi}g)(X,Y)-2g(X,\varphi Y),
\end{eqnarray*}	
where we have used the relation	(\ref{w7}), (\ref{eq101}) and (\ref{eq1a}). On the other hand, by using (\ref{w9}), we compute the left hand side of the last relation, in which we take into account the above assertions (i) and (ii), as well as the condition (\ref{w7}), to obtain 
\begin{eqnarray*}
	g((\nabla_{\xi}\varphi)X,\varphi Y)+g((\nabla_{\xi}\varphi)Y,\varphi X)&=& 0,
\end{eqnarray*}
which gives us (\ref{w5}) and complete the proof.
\end{proof}

\begin{remark}
	In the above theorem, if the restriction of $Q$ to the distribution $\mathcal{D}$ is just the identity map up to a multiplicative function, then that function should be constant, based on the condition (\ref{w7}).
\end{remark}

\begin{remark}
 As in Remark 2 of \cite{Rov56}, the condition (\ref{w7}) can be relaxed.
\end{remark}\medskip

\noindent \textbf{{Acknowledgements}}\medskip

\noindent (i) This work is supported by Mersin University Scientific Research Projects Coordination Unit. Project Number: 2025-2-AP2-5458.\smallskip

\noindent (ii) Both authors are deeply indebted to professor Rovenski for useful suggestions that led to the improvement of our manuscript.

\end{document}